\newcommand{\N}{\mathbb{N}}
\newcommand{\ZZ}{\mathbb{Z}}
\newcommand{\R}{\mathbb{R}}
\renewcommand{\S}{\mathcal{S}}
\newcommand{\Z}{\mathcal{Z}}
\newcommand{\V}{\mathscr{V}}
\newcommand{\OO}{\mathcal{O}}
\newcommand{\OM}{\OO_{M}}
\newcommand{\beq}{\begin{eqnarray}}
\newcommand{\eeq}{\end{eqnarray}}
\newcommand{\beqs}{\begin{eqnarray*}}
\newcommand{\eeqs}{\end{eqnarray*}}
\newtheorem{theorem}{Theorem}[section]
\newtheorem{lemma}[theorem]{Lemma}
\theoremstyle{definition}
\newtheorem{definition}[theorem]{Definition}
\newtheorem{example}[theorem]{Example}
\theoremstyle{remark}
\numberwithin{equation}{section}
\tikzset{join/.code=\tikzset{after node path={%
\ifx\tikzchainprevious\pgfutil@empty\else(\tikzchainprevious)%
edge[every join]#1(\tikzchaincurrent)\fi}}}
\tikzset{>=stealth',every on chain/.append style={join},
         every join/.style={->}}
\tikzstyle{labeled}=[execute at begin node=$\scriptstyle,
\newcommand\condM{\operatorname{M}}
\newcommand\condN{\operatorname{N}}
\newcommand\condDN{\operatorname{DN}}
\newcommand\condooOmega{\overline{\overline{\Omega}}}
\newcommand\ev[2]{\langle#1,#2\rangle}
\begin{document}
\title[Barrelled weighted $(PLB)$-spaces of ultradifferentiable functions]{A note on the barrelledness of weighted $(PLB)$-spaces of ultradifferentiable functions}

\author[A. Debrouwere]{Andreas Debrouwere}
\address{Department of Mathematics and Data Science \\ Vrije Universiteit Brussel, Belgium\\ Pleinlaan 2 \\ 1050 Brussels \\ Belgium}
\email{andreas.debrouwere@vub.be}

\author[L. Neyt]{Lenny Neyt}
\thanks{L. Neyt gratefully acknowledges support by FWO-Vlaanderen through the postdoctoral grant 12ZG921N}
\address{Department of Mathematics: Analysis, Logic and Discrete Mathematics\\ Ghent University\\ Krijgslaan 281\\ 9000 Gent\\ Belgium}
\email{lenny.neyt@UGent.be}

\subjclass[2010]{46E10, 46A08, 46A13, 46A63.} 
\keywords{Barrelled $(PLB)$-spaces; Gelfand-Shilov spaces; multiplier spaces; short-time Fourier transform.}

\begin{abstract}
In this note we consider weighted $(PLB)$-spaces of ultradifferentiable functions  defined via a weight function and a weight system, as introduced in our previous work  \cite{D-N-WeighPLBSpUltraDiffFuncMultSp}.  We provide a complete characterization of when these spaces are ultrabornological and barrelled in terms of  the defining weight system, thereby improving the  main Theorem 5.1 of  \cite{D-N-WeighPLBSpUltraDiffFuncMultSp}. 
%he condition of Gabor accessibility may be dropped. 
In particular, we obtain that the multiplier space of the  Gelfand-Shilov space $\Sigma^{r}_{s}(\R^{d})$ of Beurling type is ultrabornological, whereas the one of the Gelfand-Shilov space $\S^{r}_{s}(\R^{d})$ of Roumieu type is not barrelled. %Previously, we could only show the latter result under the additional assumption that either $\max(r, s) > 1$ or $\min(r, s) \geq 1/2$.
\end{abstract}

 %We study weighted (PLB)-spaces of ultradifferentiable functions defined via a weight function (in the sense of Braun, Meise and Taylor) and a weight system. We characterize when such spaces are ultrabornological in terms of the defining weight system. This generalizes Grothendieck's classical result that the space OM of slowly increasing smooth functions is ultrabornological to the context of ultradifferentiable functions. Furthermore, we determine the multiplier spaces of Gelfand-Shilov spaces and, by using the above result, characterize when such spaces are ultrabornological. In particular, we show that the multiplier space of the space of Fourier ultrahyperfunctions is ultrabornological, whereas the one of the space of Fourier hyperfunctions is not.
\maketitle

\section{Introduction}

 $(PLB)$-spaces, i.e., countable projective limits of countable inductive limits of Banach spaces, are an important class of locally convex spaces in modern functional analysis. The problem when such spaces are barrelled and ultrabornological has been thoroughly  studied, both from an abstract point of view as well as for concrete function spaces; see the survey article  \cite{D-ClassicalPLSSp} and the references therein. This question is motivated by the fact that these conditions are necessary in order to be able to apply functional analytic tools such as the Banach-Steinhaus theorem, the  De Wilde open mapping and closed graph theorems or the theory of the derived projective limit functor \cite{W-DerivFunctorsFuncAnal}.   
 
In our recent article \cite{D-N-WeighPLBSpUltraDiffFuncMultSp}, we studied weighted $(PLB)$-spaces of ultradifferentiable functions defined via a weight function and a weight system. We characterized when such spaces  are ultrabornological and barrelled in terms of a condition on the defining weight system.  This may be seen as an analogue of Grothendieck's classical result that the $(PLB)$-space $\mathcal{O}_M$ of slowly increasing smooth functions is ultrabornological \cite{Grothendieck} in the context of ultradifferentiable functions. However, we were only able to prove that the  condition on the  weight system was necessary for the spaces to be barrelled under a rather unnatural  growth assumption on the weight function and the weight system (see below for details). The goal of the present article is to show the necessity of the condition without assuming this growth constraint.

Let $\omega : [0, \infty) \rightarrow [0, \infty)$ be a weight function (in the sense of \cite{B-M-T-UltradiffFuncFourierAnal}, cf. Section \ref{sec:WeightFunctions}) and set $\phi(x) = \omega(e^{x})$. We denote by $\phi^{*}(y) = \sup_{x \geq 0} \{xy - \phi(x) \}$ the Young conjugate of $\phi$. Let $\V = \{ v_{\lambda} \mid \lambda \in (0, \infty) \}$ be a family of continuous functions $v_{\lambda} : \R^{d} \rightarrow (0, \infty)$ such that $1 \leq v_{\lambda} \leq v_{\mu}$ for all $\mu \leq \lambda$ ($\V$  is called a weight system \cite{D-N-V-NuclGSSpKernThm}, cf. Section \ref{sec:WeightSystems}). We are interested in  the following weighted $(PLB)$-spaces of ultradifferentiable functions of Beurling and Roumieu type
	\begin{equation}
		\label{eq:ZSpaces}
		\Z^{(\omega)}_{(\V)} := \varprojlim_{h \rightarrow 0^{+}} \varinjlim_{\lambda \rightarrow 0^{+}} \Z^{\omega, h}_{v_{\lambda}} , \qquad \Z^{\{\omega\}}_{\{\V\}} := \varprojlim_{\lambda \rightarrow \infty} \varinjlim_{h \rightarrow \infty} \Z^{\omega, h}_{v_{\lambda}} , 
	\end{equation}
where $\Z^{\omega, h}_{v_{\lambda}}$ denotes the Banach space consisting of all $\varphi \in C^{\infty}(\R^{d})$ such that
	\[ \|\varphi\|_{\Z^{\omega, h}_{v_{\lambda}}} := \sup_{\alpha \in \N^{d}} \sup_{x \in \R^{d}} \frac{|\varphi^{(\alpha)}(x)|}{v_{\lambda}(x)} \exp\left(-\frac{1}{h}\phi^{*}(h|\alpha|)\right) < \infty .  \]
We use $\Z^{[\omega]}_{[\V]}$ as a common notation for $\Z^{(\omega)}_{(\V)}$ and $\Z^{\{\omega\}}_{\{\V\}}$; a similar convention will be used for other notations as well. %The main goal of this article is to characterize when the space $\Z^{[\omega]}_{[\V]}$ is ultrabornological and barrelled. 
We  also need the corresponding Gelfand-Shilov spaces \cite{D-N-V-NuclGSSpKernThm}, which are defined as
	\[ \S^{(\omega)}_{(\V)} := \varprojlim_{h \rightarrow 0^{+}} \S^{\omega, h}_{v_{h}} , \qquad \S^{\{\omega\}}_{\{\V\}} := \varinjlim_{h \rightarrow \infty} \S^{\omega, h}_{v_{h}} , \]
where $\S^{\omega, h}_{v_{h}}$ denotes the Banach space consisting of all $\varphi \in C^{\infty}(\R^{d})$ such that
	\[ \|\varphi\|_{\S^{\omega, h}_{v_{h}}} := \sup_{\alpha \in \N^{d}} \sup_{x \in \R^{d}} |\varphi^{(\alpha)}(x)| v_{h}(x) \exp\left(-\frac{1}{h}\phi^{*}(h|\alpha|)\right) < \infty . \]
Lastly, we introduce the conditions on a weight system that characterize when $\Z^{[\omega]}_{[\V]}$ is ultrabornological and barrelled.

\begin{definition}\label{def-intro} Let $\V=  \{ v_{\lambda} \mid \lambda \in (0, \infty) \}$ be a weight system.
\begin{itemize}
\item[$(i)$] $\V$ is said to satisfy the condition $(\condDN)$ if
			\[ \exists \lambda > 0 ~ \forall \mu \leq \lambda, \theta \in (0, 1) ~ \exists \nu \leq \mu, C > 0 ~ \forall x \in \R^{d}~ : ~ v_{\mu}(x) \leq C v^{\theta}_{\lambda}(x) v^{1 - \theta}_{\nu}(x). \]

\item[$(ii)$] $\V$ is said to satisfy the condition $(\condooOmega)$ if
			\[ \forall \lambda > 0 ~ \exists \mu \geq \lambda ~ \forall \nu \geq \mu, \theta \in (0, 1) ~ \exists C > 0 ~ \forall x \in \R^{d} ~: ~ v_{\mu}(x) \leq C v^{\theta}_{\lambda}(x) v^{1 - \theta}_{\nu}(x) . \]
			\end{itemize}
	\end{definition}	

Our main result may now be stated as follows; we refer to Section \ref{sec:Preliminaries} for the definition of the  conditions $[\condM]$ and $[\condN]$.

\begin{theorem}
		\label{t:main}
		Let $\omega$ be a weight function and let $\V$ be a weight system satisfying $[\condM]$ and $[\condN]$. Suppose that $\S^{[\omega]}_{[\V]} \neq \{0\}$. Then, the following statements are equivalent:
			\begin{itemize}
				\item[$(i)$] $\V$ satisfies $(\condDN)$ (Beurling case); $\V$ satisfies $(\condooOmega)$ (Roumieu case).
				\item[$(ii)$] $\Z^{[\omega]}_{[\V]}$ is ultrabornological.
				\item[$(iii)$] $\Z^{[\omega]}_{[\V]}$ is barrelled.
			\end{itemize}
	\end{theorem}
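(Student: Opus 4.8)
The implication $(ii)\Rightarrow(iii)$ is trivial, since ultrabornological spaces are barrelled, and $(i)\Rightarrow(ii)$ is the sufficiency direction of Theorem~5.1 in \cite{D-N-WeighPLBSpUltraDiffFuncMultSp}, whose proof does not use the growth hypothesis imposed there. So everything reduces to $(iii)\Rightarrow(i)$, which I would establish by contraposition: assuming that $\V$ does \emph{not} satisfy $(\condDN)$ in the Beurling case, resp.\ does not satisfy $(\condooOmega)$ in the Roumieu case, I would show that $\Z^{[\omega]}_{[\V]}$ is not barrelled. I would carry out the Beurling case; the Roumieu case is parallel, the only substantive change being in the order of the quantifiers, which matches $(\condooOmega)$ to a projective-type sequence-space condition in place of the inductive-type one.

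The strategy is to locate inside $\Z^{[\omega]}_{[\V]}$ a complemented subspace — hence a quotient — isomorphic to a weighted Köthe $(PLB)$-space of sequences whose barrelledness is governed directly by $(\condDN)$. Fix a nonzero window $\psi\in\S^{[\omega]}_{[\V]}$, which exists by hypothesis, and, for a sufficiently sparse sequence $(x_{k},\xi_{k})_{k}\subseteq\R^{d}\times\R^{d}$ with $|x_{k}|\to\infty$ and $|\xi_{k}|\to\infty$ to be fixed later, put $\chi_{k}:=e^{i\ev{\xi_{k}}{\cdot}}\,\psi(\,\cdot\,-x_{k})$. Using only that $\psi$ is of Gelfand--Shilov class together with the regularity conditions $[\condM]$ and $[\condN]$, a direct Leibniz-rule computation yields $\|\chi_{k}\|_{\Z^{\omega,h}_{v_{\lambda}}}\asymp e^{\omega(|\xi_{k}|)/h}\,v_{\lambda}(x_{k})^{-1}$ with constants independent of $k$ and $\lambda$, and the essentially disjoint localization of the $\chi_{k}$ upgrades this to the statement that the synthesis map $S\colon(c_{k})_{k}\mapsto\sum_{k}c_{k}\chi_{k}$ is a topological isomorphism of $\Lambda:=\varprojlim_{h\to0^{+}}\varinjlim_{\lambda\to0^{+}}\ell^{\infty}(w^{h,\lambda})$, where $w^{h,\lambda}_{k}:=e^{\omega(|\xi_{k}|)/h}\,v_{\lambda}(x_{k})^{-1}$, onto its range in $\Z^{[\omega]}_{[\V]}$. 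Dually, an integration-by-parts estimate for the short-time Fourier transform shows that the analysis map $R\colon\varphi\mapsto(V_{\psi}\varphi(x_{k},\xi_{k}))_{k}$ is continuous from $\Z^{[\omega]}_{[\V]}$ into $\Lambda$, and a standard Gabor-frame argument, available in the framework of \cite{D-N-V-NuclGSSpKernThm}, shows that $R\circ S$ is an automorphism of $\Lambda$ once the lattice is chosen sparse enough. Then $S\circ(R\circ S)^{-1}\circ R$ is a continuous projection of $\Z^{[\omega]}_{[\V]}$ onto a copy of $\Lambda$, so that $\Z^{[\omega]}_{[\V]}$ barrelled forces $\Lambda$ barrelled.

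It then remains to choose the points $(x_{k},\xi_{k})$, exploiting the failure of $(\condDN)$ for $\V$, so that $\Lambda$ is not barrelled. For weighted $(PLB)$-spaces of the form $\varprojlim_{h}\varinjlim_{\lambda}\ell^{\infty}(\,\cdot\,)$, barrelledness coincides with ultrabornologicity and is characterized by an explicit interpolation/domination condition on the defining Köthe matrix, of the same shape as $(\condDN)$ (Bierstedt--Bonet; cf.\ \cite{D-N-WeighPLBSpUltraDiffFuncMultSp} and the survey \cite{D-ClassicalPLSSp}). Negating $(\condDN)$ produces, for each $\lambda>0$, a threshold-defeating pair $(\mu_{\lambda},\theta_{\lambda})$ together with, for each smaller index $\nu$ and each $C>0$, a point at which $v_{\mu_{\lambda}}$ exceeds $C\,v_{\lambda}^{\theta_{\lambda}}v_{\nu}^{1-\theta_{\lambda}}$. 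Fixing a sequence $\lambda_{m}\downarrow0$, one harvests these witnessing points into consecutive blocks indexed by $m$ (and the auxiliary parameters $\nu,C$) to build $(x_{k})_{k}$, and in the $m$-th block one assigns a single frequency scale $|\xi_{k}|=\Xi_{m}\uparrow\infty$ calibrated so that the factor $e^{\omega(\Xi_{m})/h}$ couples the $h$-variable to the $\lambda$-variable at exactly the rate that makes the geometric-mean estimate fail on the matrix $(w^{h,\lambda}_{k})$. With this choice $\Lambda$ violates the Bierstedt--Bonet condition, hence is not barrelled, and therefore neither is $\Z^{[\omega]}_{[\V]}$.

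The main obstacle is precisely this calibration, and it is exactly the step that forced the unnatural growth hypothesis in \cite{D-N-WeighPLBSpUltraDiffFuncMultSp}: converting the \emph{continuous} failure of $(\condDN)$ over $\R^{d}$ into a \emph{discrete} failure of the Köthe-matrix condition for sampled weights requires matching the spatial blow-up of the $v_{\lambda}$'s against the frequency/smoothness scale $e^{\omega(\Xi_{m})/h}$ carried by the atoms, and no structural reason guarantees that the two can be matched for an arbitrary weight function and weight system. The point of working with atoms that manifestly belong to $\Z^{[\omega]}_{[\V]}$, instead of with a global short-time Fourier representation of the whole space (whose identification is what needs the hypothesis), is precisely to make this matching arrangeable by hand through the free parameters $\Xi_{m}$. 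The remaining verifications — the norm estimate for $\chi_{k}$, the continuity of $R$ and $S$, and the invertibility of $R\circ S$ — are routine given the available Gabor-analytic machinery for Gelfand--Shilov spaces.
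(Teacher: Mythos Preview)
Your approach is essentially the discretization/Gabor-frame strategy of \cite{D-N-WeighPLBSpUltraDiffFuncMultSp}, and the gap is exactly where you locate it yourself: the ``calibration'' of the frequencies $\Xi_m$ against the spatial witnesses of $\neg(\condDN)$ is asserted to be ``arrangeable by hand'' but is not carried out. Two concrete obstructions make this more than a routine verification. First, the sparse-lattice requirement (needed so that $R\circ S$ is a Neumann perturbation of the identity \emph{on the weighted space} $\Lambda$, not merely on $\ell^2$) and the requirement that the sampled matrix $w^{h,\lambda}_k=e^{\omega(|\xi_k|)/h}/v_\lambda(x_k)$ violate the Bierstedt--Bonet condition pull in different directions: the witnessing points produced by $\neg(\condDN)$ come with no separation, and perturbing them to achieve sparseness while preserving the violation has to be justified uniformly in the countable family of parameters $(N,M,K,n,m,k)$. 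Second, invertibility of $R\circ S$ on $\Lambda$ is not a ``standard Gabor-frame argument'' once the weights are unbounded in both directions; the off-diagonal decay of $(V_\psi\psi(x_k-x_j,\xi_k-\xi_j))$ must dominate the weight ratios $w^{h,\lambda}_k/w^{h,\lambda}_j$ for \emph{all} admissible $(h,\lambda)$, and without a growth constraint tying $\omega$ to $\V$ this is precisely the Gabor-accessibility issue you are trying to avoid. Your appeal to \cite{D-N-V-NuclGSSpKernThm} does not help here: that paper concerns nuclearity, not frame-type invertibility on $(PLB)$-sequence spaces.

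The paper's route is genuinely different and sidesteps all of this. It never discretizes. Instead it writes $\Z^{[\omega]}_{[\V]}$ as the projective limit of a strongly reduced spectrum of $(LB)$-spaces and invokes Wengenroth's abstract characterization: barrelledness forces a dual-norm inequality of $(wQ)$-type on $(\Z^{\omega,h_N}_{(\V)})'$ (resp.\ $(\Z^{\{\omega\}}_{v_{\lambda_N}})'$). Via a Plancherel identity for the STFT and the mapping properties of $V_\psi$, each dual norm $\|f\|^*$ is shown to be equivalent, up to a fixed loss of indices, to an STFT norm $\|V_\psi f\|_{v_\lambda\otimes e^{-\omega/h}}$. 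One then \emph{tests} the resulting continuous inequality on the family $f_{x,\xi}=M_\xi T_x\psi$: the covariance $|V_\psi f_{x,\xi}(y,\eta)|=|V_\psi\psi(y-x,\eta-\xi)|$ together with $V_\psi\psi(0,0)=1$ immediately yields the pointwise inequality on $\R^{2d}$ that, by Lemma~\ref{wQ}, is equivalent to $(\condDN)$ (resp.\ $(\condooOmega)$). No complemented subspace, no sequence-space reduction, no Gabor frame, and therefore no calibration problem.
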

	
	The assumption that $\mathcal{S}^{[\omega]}_{[\V]}$ is non-trivial in Theorem \ref{t:main} should be interpreted as an implicit growth condition on $\omega$ and $\V$ under which this result is valid. 
We have that $\mathcal{S}^{[\omega]}_{[\V]} \neq \{ 0\}$ if either $\omega$ is non-quasianalytic or if  the following condition is satisfied
\begin{itemize} 
\item[$(NT)$]  $\omega(t) = O(t^{1/r})$ and $\forall \lambda ~   \exists h ~  v_\lambda(x) = O(e^{h|x|^{1/s}})$ for some $r,s >0$ with $r+s > 1$ (Beurling case);   $\omega(t) = O(t^{1/r})$ and $\forall h ~  
~   \exists \lambda ~  v_\lambda(x) = O(e^{h|x|^{1/s}})$ for some $r,s >0$ with $r+s \geq 1$ (Roumieu case).
\end{itemize}
The sufficiency of the condition $(NT)$ follows from the fact that the classical Gelfand-Shilov space  $\S^{r}_{s}$ is non-trivial if and only if $r+s \geq 1$  \cite[Section 8]{G-S-GenFunc2}. We refer to \cite{D-V2018} for conditions on $\V$ ensuring that  $\mathcal{S}^{[\omega]}_{[\V]} \neq \{ 0\}$ in the special case that $\omega(t) = t$. In general, the characterization of the non-triviality of $\mathcal{S}^{[\omega]}_{[\V]}$ is an open problem.

 We now compare Theorem \ref{t:main} with our previous work \cite{D-N-WeighPLBSpUltraDiffFuncMultSp}.  The implication $(i) \Rightarrow (ii)$ has been shown in  \cite[Theorem 5.1]{D-N-WeighPLBSpUltraDiffFuncMultSp}, whereas $(ii) \Rightarrow (iii)$ holds for any locally convex space. However, in \cite[Theorem 5.1]{D-N-WeighPLBSpUltraDiffFuncMultSp} we could only show $(iii) \Rightarrow (i)$ under the additional assumption that $\S^{[\omega]}_{[\V]}$ is so-called \emph{Gabor accessible}. The latter means that  $\S^{[\omega]}_{[\V]}$ contains a pair of dual windows generating a Gabor frame; see  \cite[Section 4]{D-N-WeighPLBSpUltraDiffFuncMultSp} for more information. The existence of such a frame enabled us to prove the implication  $(iii) \Rightarrow (i)$ by reducing it to the corresponding problem for certain weighted $(PLB)$-sequence spaces and then using results from \cite{A-B-B-ProjLimWeighLBSpContFunc} about the barrelledness of such spaces.  Similarly as above, one should interpret Gabor accessibility as an implicit growth condition on $\omega$ and $\V$. Classical results from time-frequency analysis concerning window design \cite{C-PairsDualGaborFrameGenCompSupp, Janssen} (see also see  \cite[Proposition 4.5]{D-N-WeighPLBSpUltraDiffFuncMultSp}) imply that  $\S^{[\omega]}_{[\V]}$ is Gabor accessible if either $\omega$ is non-quasianalytic or if the following condition is satisfied
\begin{itemize} 
\item[$(GA)$]$\omega(t) = o(t^2)$ and $\forall \lambda ~ \forall h ~: ~ v_\lambda(x) = O(e^{h|x|^2})$ (Beurling case); $\omega(t) = O(t^2)$ and $\forall h ~\exists \lambda ~: ~ v_\lambda(x) = O(e^{h|x|^2})$ (Roumieu case). 
\end{itemize}
It seems that these are the best known conditions on $\omega$ and $\V$ ensuring that $\S^{[\omega]}_{[\V]}$ is Gabor accessible.  In particular, it is an open problem whether every non-trivial Gelfand-Shilov space is Gabor accessible. Note that the condition $(GA)$ is much more restrictive than $(NT)$. The present article is devoted to the proof of $(iii) \Rightarrow (i)$ in Theorem \ref{t:main} under the sole assumption that $\S^{[\omega]}_{[\V]} \neq \{0\}$. We develop a completely novel method to achieve this goal. Namely, we  combine an abstract result from the theory of projective spectra of $(LB)$-spaces \cite{V-LecturesProjSpecLBSp,W-DerivFunctorsFuncAnal} with the mapping properties of the short-time Fourier transform  \cite{G-FoundTFAnal} on various spaces related to $\Z^{[\omega]}_{[\V]}$.% This method only requires the existence a non-zero function in $\S^{[\omega]}_{[\V]}$, which we use as a window for the STFT.

One of the main motivations to study the spaces $\Z^{[\omega]}_{[\V]}$ is that they are the multiplier spaces of the Gelfand-Shilov spaces $\S^{[\omega]}_{[\V]}$. More precisely, consider the abstract multiplier space
	\[ \OM(\S^{[\omega]}_{[\V]}) = \{ f \in \S^{\prime [\omega]}_{[\V]} \mid \varphi \cdot f \in \S^{[\omega]}_{[\V]} \text{ for all } \varphi \in \S^{[\omega]}_{[\V]} \}  \]
endowed with the locally convex topology induced by the embedding
	\[ \OM(\S^{[\omega]}_{[\V]}) \rightarrow L_{b}(\S^{[\omega]}_{[\V]}, \S^{[\omega]}_{[\V]}) , \quad f \mapsto (\varphi \mapsto \varphi \cdot f) . \]
 Under the conditions of Theorem \ref{t:main} and the additional natural assumption
	\begin{equation} 
		\label{eq:CondS}
		\forall \lambda, \mu ~ \exists \nu \leq \lambda, \mu ~ (\forall \nu ~ \exists \lambda, \mu \geq \nu ) : \quad \sup_{x \in \R^{d}} \frac{v_{\lambda}(x) v_{\mu}(x)}{v_{\nu}(x)} < \infty , 
	\end{equation}
we have shown in \cite[Theorem 5.3]{D-N-WeighPLBSpUltraDiffFuncMultSp} that $\OM(\S^{[\omega]}_{[\V]}) = \Z^{[\omega]}_{[\V]}$
as locally convex spaces.  Consequently,  Theorem \ref{t:main} characterizes when these multiplier spaces are ultrabornological and barrelled. In this context, the assumption $\S^{[\omega]}_{[\V]} \neq \{0\}$ in Theorem \ref{t:main}  is very natural. Let us explicitly state this result in a specific case. Let $\eta$ be another weight function and consider the weight system $\V_{\eta} = \{ e^{\frac{1}{\lambda}\eta} \mid \lambda	\in (0, \infty) \}$. Set $\S^{[\omega]}_{[\eta]} = \S^{[\omega]}_{[\V_{\eta}]}$. We find the following result about the locally convex structure of $\OM(\S^{[\omega]}_{[\eta]})$.
	
	\begin{theorem}
		\label{t:GS}
		Let $\omega$ and $\eta$ be two weight functions. Suppose that $\S^{[\omega]}_{[\eta]} \neq \{0\}$. Then,
			\begin{itemize}
				\item[$(i)$] $\OM(\S^{(\omega)}_{(\eta)})$ is ultrabornological.
				\item[$(ii)$] $\OM(\S^{\{\omega\}}_{\{\eta\}})$ is not barrelled.
			\end{itemize}
	\end{theorem}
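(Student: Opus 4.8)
The plan is to obtain Theorem~\ref{t:GS} as a corollary of Theorem~\ref{t:main} and \cite[Theorem~5.3]{D-N-WeighPLBSpUltraDiffFuncMultSp}, applied to the concrete weight system $\V_{\eta} = \{ e^{\frac{1}{\lambda}\eta} \mid \lambda \in (0,\infty) \}$; the entire content then lies in checking the structural hypotheses for $\V_{\eta}$. First I would record the only feature of $\eta$ that is used: since $\eta$ is a weight function we have $\log t = o(\eta(t))$, so $\eta$ is unbounded and hence $x \mapsto \eta(|x|)$ is unbounded on $\R^{d}$. Next I would verify, by elementary bookkeeping with the exponents, that $\V_{\eta}$ satisfies the conditions $[\condM]$ and $[\condN]$ of Theorem~\ref{t:main} together with condition~\eqref{eq:CondS}. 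For the last one, note that $v_{\lambda}(x) v_{\mu}(x)/v_{\nu}(x) = \exp\big((\tfrac{1}{\lambda} + \tfrac{1}{\mu} - \tfrac{1}{\nu})\eta(|x|)\big)$ has finite supremum over $\R^{d}$ exactly when $\tfrac{1}{\nu} \geq \tfrac{1}{\lambda} + \tfrac{1}{\mu}$, so one may take $\nu = \lambda\mu/(\lambda+\mu)$ in the Beurling case and $\lambda = \mu = 2\nu$ in the Roumieu case. With these verifications, \cite[Theorem~5.3]{D-N-WeighPLBSpUltraDiffFuncMultSp} gives $\OM(\S^{[\omega]}_{[\eta]}) = \Z^{[\omega]}_{[\V_{\eta}]}$ as locally convex spaces, so it remains only to decide in each case whether the spectral condition of Theorem~\ref{t:main}$(i)$ is met.

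For part $(i)$ I would show that $\V_{\eta}$ satisfies $(\condDN)$. Fix any $\lambda > 0$; given $\mu \leq \lambda$ and $\theta \in (0,1)$, put $\nu := (1-\theta)\lambda\mu/(\lambda - \theta\mu)$. Since $\theta\mu < \mu \leq \lambda$ this is positive, and $\nu \leq \mu$ is equivalent to $(1-\theta)\lambda \leq \lambda - \theta\mu$, i.e.\ to $\mu \leq \lambda$, which holds. With this choice $\tfrac{1}{\mu} = \tfrac{\theta}{\lambda} + \tfrac{1-\theta}{\nu}$, hence $v_{\mu} = v_{\lambda}^{\theta} v_{\nu}^{1-\theta}$ pointwise and $(\condDN)$ holds with $C = 1$. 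By Theorem~\ref{t:main}, $\Z^{(\omega)}_{(\V_{\eta})} = \OM(\S^{(\omega)}_{(\eta)})$ is ultrabornological.

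For part $(ii)$ I would show that $\V_{\eta}$ fails $(\condooOmega)$. Fix $\lambda > 0$ and let $\mu \geq \lambda$ be arbitrary. Choose $\theta \in (0,1)$ small enough that $\tfrac{\theta}{\lambda} < \tfrac{1}{2\mu}$, and then $\nu \geq \mu$ large enough that $\tfrac{1-\theta}{\nu} < \tfrac{1}{2\mu}$; then $\tfrac{\theta}{\lambda} + \tfrac{1-\theta}{\nu} < \tfrac{1}{\mu}$, so $x \mapsto \big(\tfrac{1}{\mu} - \tfrac{\theta}{\lambda} - \tfrac{1-\theta}{\nu}\big)\eta(|x|)$ is unbounded on $\R^{d}$ and no constant $C$ can satisfy $v_{\mu} \leq C v_{\lambda}^{\theta} v_{\nu}^{1-\theta}$. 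As $\mu \geq \lambda$ was arbitrary, $(\condooOmega)$ fails, and Theorem~\ref{t:main} yields that $\Z^{\{\omega\}}_{\{\V_{\eta}\}} = \OM(\S^{\{\omega\}}_{\{\eta\}})$ is not barrelled (indeed not ultrabornological).

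I do not expect a genuine obstacle here: everything reduces to the equivalences in Theorem~\ref{t:main} once one knows that $\V_{\eta}$ fits its framework. The only steps needing care are the verification of $[\condM]$, $[\condN]$ and~\eqref{eq:CondS} for $\V_{\eta}$ — purely a matter of the algebra of the exponents $1/\lambda$ — and respecting the quantifier pattern, namely that $(\condDN)$ only asks for a single $\lambda$ (so exhibiting one value suffices) while the failure of $(\condooOmega)$ must be established for the $\mu$ attached to every $\lambda$.
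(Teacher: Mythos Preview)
Your proposal is correct and follows essentially the same route as the paper: the paper deduces Theorem~\ref{t:GS} directly from Theorem~\ref{t:main} and \cite[Theorem~5.3]{D-N-WeighPLBSpUltraDiffFuncMultSp} after noting (via the Example preceding Theorem~\ref{t:GS}, which in turn cites \cite[Lemma~3.5]{D-N-WeighPLBSpUltraDiffFuncMultSp}) that $\V_{\eta}$ satisfies $[\condM]$, $[\condN]$, \eqref{eq:CondS}, and $(\condDN)$ but not $(\condooOmega)$. You simply spell out explicitly the elementary exponent arithmetic behind these verifications rather than citing the earlier lemma.
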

In \cite{D-N-WeighPLBSpUltraDiffFuncMultSp}, we were only able to show Theorem \ref{t:GS}$(ii)$ under the additional assumption that either $\omega$ or $\eta$ is non-quasianalytic or that $\omega(t) = O(t^2)$ and  	$\eta(t) = O(t^2)$ (cf.\ the above discussion). 
Note that Theorem \ref{t:GS} particularly applies to the classical Gelfand-Shilov spaces \cite{G-S-GenFunc2} 
	\[ \Sigma^{r}_{s} = \S^{(t^{1/r})}_{(t^{1/s})} , \qquad \S^{r}_{s} = \S^{\{t^{1/r}\}}_{\{t^{1/s}\}}, \qquad r,s >0. \]
%	\begin{corollary}
%		\label{t:ClassicalGS}
%		Let $r, s > 0$.
%			\begin{itemize}
%				\item[$(i)$] If $\Sigma^{r}_{s}$ is non-trivial, then  $\OM(\Sigma^{r}_{s})$ is ultrabornological.
%				\item[$(ii)$] If $\S^{r}_{s}$ is non-trivial,  $\OM(\S^{r}_{s})$ is not barrelled. 
%			\end{itemize}
%	\end{corollary}
	
%We mention that the space $\Sigma^{r}_{s}$ ($\S^{r}_{s}$) is non-trivial if and only if $r + s > 1$ ($r + s \geq 1$)  (cf. \cite[Section 8]{G-S-GenFunc2}). 
	
%We have structured the paper as follows. In Section \ref{sec:Preliminaries} we recall some preliminary definitions and results from our previous paper \cite{D-N-WeighPLBSpUltraDiffFuncMultSp}. Then, in Section \ref{sec:ProjectiveSpectra}, we provide a brief introduction to the theory of strongly reduced projective limits where in particular we present the key result needed for our proof, see Theorem \ref{t:BarrelledImpliesP*2}. Then finally in Section \ref{sec:ProofMainThm} we show Theorem \ref{t:main}. 

\section{Preliminaries}
\label{sec:Preliminaries}

We introduce several notions and results  that will be used throughout this paper. 

\subsection{Weight functions}
\label{sec:WeightFunctions}

A non-decreasing continuous function $\omega : [0, \infty) \rightarrow [0, \infty)$ is called a \emph{weight function} (in the sense of Braun, Meise, and Taylor \cite{B-M-T-UltradiffFuncFourierAnal}) if $\omega(0) = 0$ and $\omega$ satisfies the following properties:
	\begin{itemize}
		\item[$(\alpha)$] $\omega(2t) = O(\omega(t))$ as $t \rightarrow \infty$;
		\item[$(\gamma)$] $\log t = o(\omega(t))$ as $t \rightarrow \infty$;
		\item[$(\delta)$] $\phi : [0, \infty) \rightarrow [0, \infty)$, $\phi(x) = \omega(e^{x})$, is convex.
	\end{itemize}
We extend $\omega$ to $\R^{d}$ as the radial function $\omega(x) = \omega(|x|)$, $x \in \R^{d}$. Condition $(\alpha)$ implies that there is a constant $S > 0$ such that \cite[Lemma 1]{B-M-T-UltradiffFuncFourierAnal}
	\begin{equation}
		\label{eq:WeighFuncModerate}
		\omega(x + y) \leq S(\omega(x) + \omega(y) + 1) , \qquad x, y \in \R^{d} . 
	\end{equation}
The \emph{Young conjugate of $\phi$} is defined as
	\[ \phi^{*} : [0, \infty) \rightarrow [0, \infty), \quad \phi^{*}(y) = \sup_{x \geq 0}\{ xy - \omega(x)\}.  \]
The function $\phi^{*}$ is convex and increasing, $(\phi^{*})^{*} = \phi$, and the function $y \mapsto \phi^{*}(y) / y$ is increasing on $[0, \infty)$ and tends to infinity as $y \rightarrow \infty$. %We shall need the following basic lemma.

\begin{lemma}[{\cite[Lemma 2.6]{Heinrich}\label{M12}}]
Let $\omega$ be a weight function. Then,
\begin{itemize}
\item[$(i)$] For all $h,k,l > 0$ there are $m,C > 0$ such that
\begin{equation}
 \frac{1}{m}\phi^*(m(y+l)) + ky \leq \frac{1}{h}\phi^*(hy) + \log C, \qquad  y \geq 0.
\label{ineq-0}
\end{equation}
\item[$(ii)$] For all $m,k,l > 0$ there are $h,C > 0$ such that \eqref{ineq-0} holds.
\end{itemize}
\end{lemma}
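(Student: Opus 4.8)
The plan is to exploit the fundamental duality $(\phi^*)^* = \phi$ together with the convexity and superlinearity of $\phi^*$, reducing the inequality \eqref{ineq-0} to a statement about the asymptotics of $\phi^*$ and $\phi$. For part $(i)$, the key observation is that $\frac1h\phi^*(hy)$, viewed as a function of $y$, dominates any linear term $ky$ up to an additive constant: indeed, since $\phi^*(y)/y \to \infty$, for any $k>0$ there is $C_k$ with $ky \leq \phi^*(y) + \log C_k$ for all $y\geq 0$; rescaling gives $ky \leq \frac1h\phi^*(hy) + (\text{const})$ for any fixed $h$. So the heart of the matter is the shift: one must absorb $\frac1m\phi^*(m(y+l))$ — which is $\frac1m\phi^*(my)$ shifted by $ml$ in the argument — into $\frac1h\phi^*(hy)$ by choosing $m$ small relative to $h$. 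Here is where I would use condition $(\alpha)$ on $\omega$ (equivalently the moderate-growth estimate \eqref{eq:WeighFuncModerate}), which transfers to a corresponding estimate on $\phi$ and hence, by Legendre duality, to one on $\phi^*$.

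First I would record the elementary facts: convexity of $\phi^*$, the identity $(\phi^*)^* = \phi$, and the superlinearity $\phi^*(y)/y \nearrow \infty$. Then I would prove the following \emph{sublinearity-in-the-scaling} statement: for every $m>0$ and every $\varepsilon>0$ there exist $h>0$ (large) and $C>0$ such that $\frac1m\phi^*(my) \leq \frac1h\phi^*(hy) - \varepsilon y + \log C$ for all $y\geq 0$ — and symmetrically, for every $h>0$ and $\varepsilon>0$ there exist $m>0$ (small) and $C>0$ with the reverse role. This is the standard mechanism: from $(\alpha)$ one gets $\phi(x+\log 2)\leq a\phi(x)+b$ for suitable $a,b$, whence by iterating, $\phi(\rho x)\leq A\phi(x)+B$ for $\rho>1$ in a controlled range; dualizing turns a dilation in the argument of $\phi$ into a dilation in the argument of $\phi^*$, giving $\frac1m\phi^*(my)$ versus $\frac1h\phi^*(hy)$ comparisons. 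Then the shift by $l$ in $\phi^*(m(y+l))$ is handled by splitting: on $y \geq l$ one has $m(y+l)\leq 2my$ and invokes the scaling estimate; on $0\leq y < l$ one bounds everything by a constant depending only on $m,l$.

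With these two ingredients in hand, part $(i)$ follows by: (a) picking $h$ large enough (via the scaling lemma applied to the parameter $m$ we are about to produce — actually one fixes the relation $m$ small, $h$ given) so that $\frac1m\phi^*(m\cdot 2y) \leq \frac{1}{2h}\phi^*(hy)+\log C_1$; (b) absorbing the linear term $ky$ and the shift-contribution $mly$ into the remaining $\frac{1}{2h}\phi^*(hy)$ using superlinearity, at the cost of another constant $C_2$; (c) handling small $y$ separately. Part $(ii)$ is the mirror image: given $m,k,l$ we must find $h$ (this time $h$ is the \emph{output} and may be taken \emph{small}), and the same two ingredients apply with the roles of $h$ and $m$ interchanged in the scaling lemma. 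I would present $(i)$ in detail and remark that $(ii)$ is entirely analogous.

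The main obstacle is the bookkeeping in the scaling lemma: making the transfer from the moderate-growth condition $(\alpha)$ on $\omega$ to a clean two-sided dilation estimate on $\phi^*$ requires care, because Legendre duality reverses inequalities and the dilation parameter gets inverted, so one must be scrupulous about which parameter is "large'' and which is "small'' at each stage. In practice this is exactly the computation behind \cite[Lemma 2.6]{Heinrich} (and the closely related estimates in \cite{B-M-T-UltradiffFuncFourierAnal}), so I would either reproduce that argument or simply cite it; the conceptual content is that $\phi^*$ behaves, under rescaling its argument, with the flexibility encoded by $(\alpha)$, and this is precisely what makes the projective/inductive limits defining $\Z^{[\omega]}_{[\V]}$ interact nicely with index shifts.
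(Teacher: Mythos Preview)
The paper does not give its own proof of this lemma; it is quoted verbatim from \cite[Lemma 2.6]{Heinrich} and used as a black box. Your outline is a reasonable reconstruction of the standard argument behind that result: use condition $(\alpha)$ on $\omega$ to obtain a dilation estimate $\phi(x+\varepsilon)\leq A\phi(x)+B$, transfer it by Legendre duality to a comparison between $\frac{1}{m}\phi^*(my)$ and $\frac{1}{h}\phi^*(hy)$ for suitably related $m,h$, absorb the shift $y\mapsto y+l$ by the split $y\geq l$ / $y<l$, and swallow the linear term $ky$ using $\phi^*(y)/y\to\infty$. This is exactly the mechanism in \cite{Heinrich} and \cite{B-M-T-UltradiffFuncFourierAnal}, so there is nothing to compare beyond saying that your sketch matches the cited source.

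One correction: in part $(ii)$ you write that ``$h$ is the output and may be taken \emph{small}''. It must be taken \emph{large}. Since $h\mapsto \frac{1}{h}\phi^*(hy)$ is increasing (as you yourself note, via the monotonicity of $\phi^*(y)/y$), making the right-hand side of \eqref{ineq-0} dominate a prescribed left-hand side requires enlarging $h$. This is also how the paper uses the lemma: part $(i)$ generates the \emph{decreasing} sequence $(h_n)$ in the Beurling case (given $h=h_n$, find $m=h_{n+1}<h_n$), while part $(ii)$ generates the \emph{increasing} sequence $(h_n)$ in the Roumieu case (given $m=h_n$, find $h=h_{n+1}>h_n$). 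With that sign fixed, your plan is sound.
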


\subsection{Weight systems}
\label{sec:WeightSystems}

A family $\V = \{ v_{\lambda} \mid \lambda \in (0, \infty) \}$ of continuous functions $v_{\lambda} : \R^{d} \rightarrow (0, \infty)$ is called a \emph{weight system} \cite{D-N-V-NuclGSSpKernThm} if $1 \leq v_{\lambda}(x) \leq v_{\mu}(x)$ for all $x \in \R^{d}$ and $\mu \leq \lambda$. We write $\widetilde{v}(x) = v(-x)$ for reflection about the origin. Given a weight system $\V$, we set $\widetilde{\V} = \{ \widetilde{v}_{\lambda} \mid \lambda \in (0, \infty) \}$. 

We will employ the following  conditions on a weight system  $\V$:
	\begin{itemize}
		\item[$(\condM)$] $\forall \lambda > 0 ~ \exists \mu, \nu \leq \lambda ~ \exists C > 0 ~ \forall x, y \in \R^{d} : v_{\lambda}(x + y) \leq Cv_{\mu}(x) v_{\nu}(y)$;
		\item[$\{\condM\}$] $\forall \mu, \nu > 0 ~ \exists \lambda \geq \mu, \nu ~ \exists C > 0 ~ \forall x, y \in \R^{d} : v_{\lambda}(x + y) \leq Cv_{\mu}(x) v_{\nu}(y)$;
		\item[$(\condN)$] $\forall \lambda > 0 ~ \exists \mu \leq \lambda : v_{\lambda} / v_{\mu} \in L^{1}$;
		\item[$\{\condN\}$] $\forall \mu > 0 ~ \exists \lambda \geq \mu : v_{\lambda} / v_{\mu} \in L^{1}$.
	\end{itemize}
We will use $[\condM]$ as a common notation for $(\condM)$ and $\{\condM\}$; a similar convention will be used for other notations. In addition, we often first state assertions for the Beurling case followed in parenthesis by the corresponding ones for the Roumieu case.

Note that $[\condM]$ and $[\condN]$ imply that \cite[Lemma 3.1]{D-N-V-NuclGSSpKernThm}:
	\begin{equation}
		\label{eq:improvedN}
		\forall \lambda > 0 ~ \exists \mu \leq \lambda ~ ( \forall \mu > 0 ~ \exists \lambda \geq \mu ) : \quad v_{\lambda} / v_{\mu} \in L^{1} \cap C_{0} , 
	\end{equation}
where $C_{0}$ denotes the space of continuous functions on $\R^{d}$ vanishing at infinity.

	\begin{example}
		For a given weight function $\omega$, we define the weight system
			\[ \V_{\omega} = \{ e^{\frac{1}{\lambda} \omega} \mid \lambda \in (0, \infty) \} . \]
		Then, $\V_{\omega}$ satisfies $[\condM]$, $[\condN]$ and \eqref{eq:CondS}. Furthermore, it satisfies $(\condDN)$ but not $(\condooOmega)$ \cite[Lemma 3.5]{D-N-WeighPLBSpUltraDiffFuncMultSp}. 
	\end{example}
	\noindent In view of the above example, Theorem \ref{t:GS} is a direct consequence of Theorem \ref{t:main}. 
	
	The conditions $(\condDN)$ and $(\condooOmega)$  (see Definition \ref{def-intro}) may be characterized in terms of a $(wQ)$-type condition \cite{A-B-B-ProjLimWeighLBSpContFunc}:
	\begin{lemma}[{\cite[Lemma 6.4]{D-N-WeighPLBSpUltraDiffFuncMultSp}}]
\label{wQ}
	Let $\omega$ be a weight function and let $\V$ be a weight system.  \\
	$(i)$ $\V$ satisfies $(\condDN)$  if and only if
	\begin{gather*}
			\forall N \in \N ~ \exists M \geq N, n \in \N ~ \forall K \geq M, m \in \N ~ \exists k \in \N, C > 0~ \forall (x, \xi) \in \R^{2d}~:\\
				v_{1/m}(x) e^{-M \omega(\xi)} \leq C \left( v_{1/n}(x) e^{-N \omega(\xi)} + v_{1/k}(x) e^{-K \omega(\xi)}  \right).
			\end{gather*}

	$(ii)$ $\V$ satisfies $(\condooOmega)$  if and only if 
	\begin{gather*}
				\forall N \in \N ~ \exists M \geq N, n \in \N ~ \forall K \geq M, m \in \N ~ \exists k \in \N, \exists C > 0~ \forall (x, \xi) \in \R^{2d}~:\\
				v_{M}(x) e^{-\frac{1}{m} \omega(\xi)} \leq C \left( v_{N}(x) e^{-\frac{1}{n} \omega(\xi)} + v_{K}(x) e^{-\frac{1}{k} \omega(\xi)}\right).
			\end{gather*}

	\end{lemma}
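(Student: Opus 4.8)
The plan is to peel off a measure-theoretic triviality, reducing each $(wQ)$-type condition to a statement about finitely many points of a single real parameter, and then to match it with $(\condDN)$ (resp.\ $(\condooOmega)$) by an elementary convexity argument. I describe the Beurling case; the Roumieu case is entirely parallel, with the two families of indices (and their orderings) interchanged.

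\emph{Step 1 (reduction to one parameter).} Since $\omega$ is continuous, non-decreasing, $\omega(0)=0$, and $\omega(t)\to\infty$ (the last by $(\gamma)$), the radial map $\xi\mapsto\omega(|\xi|)$ is a continuous surjection of $\R^{d}$ onto $[0,\infty)$. As the $(wQ)$-inequality involves $\xi$ only through $\omega(\xi)$, it is equivalent to
\[ v_{1/m}(x)e^{-Mt}\le C\bigl(v_{1/n}(x)e^{-Nt}+v_{1/k}(x)e^{-Kt}\bigr),\qquad x\in\R^{d},\ t\ge0 . \]
Because $j\mapsto v_{1/j}$ is non-decreasing, the left-hand side is non-decreasing in $m$ and each right-hand term is non-decreasing in its index; exploiting the quantifier pattern of $(wQ)$ one may therefore reduce (in the direction in which $(wQ)$ is to be proved) to the case, and assume (in the direction in which $(wQ)$ is to be used), that $n\le m\le k$ and that these indices are as large as convenient.

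\emph{Step 2 ($(\condDN)\Rightarrow(wQ)$).} Given $N$, take $\lambda$ from $(\condDN)$ and set $M:=N+1$, $n:=\lceil\max(N,1/\lambda)\rceil$. Given $K\ge M$ and (after the Step~1 reduction) $m\ge 1/\lambda$, put $\theta:=\tfrac{K-M}{K-N}\in(0,1)$, so $M=\theta N+(1-\theta)K$. Feeding $\mu:=1/m\le\lambda$ and this $\theta$ into $(\condDN)$ yields $\nu=1/k\le 1/m$ and $C>0$ with $v_{1/m}\le Cv_{\lambda}^{\theta}v_{1/k}^{1-\theta}\le Cv_{1/n}^{\theta}v_{1/k}^{1-\theta}$; multiplying by $e^{-Mt}$ and applying the weighted AM--GM inequality $a^{\theta}b^{1-\theta}\le a+b$ gives the one-parameter $(wQ)$-inequality. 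The degenerate cases $K=M$ and $M=N$ are immediate (take $k$ large, or let $t\to\infty$).

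\emph{Step 3 ($(wQ)\Rightarrow(\condDN)$).} Apply $(wQ)$ with $N:=1$ to get $M,n$, and set $\lambda:=1/n$. Let $\mu=1/m\le\lambda$ and $\theta^{*}\in(0,1)$ be given. Choose $K\in\N$ with $K\ge 2M$ and $\tfrac{K-M}{K-1}\ge\theta^{*}$ (possible since this fraction tends to $1$), apply $(wQ)$ for this $K$ and $m$, and enlarge the resulting $k$ so that $k\ge m,n$. For fixed $x$, writing $a=v_{1/m}(x)$, $b=v_{1/n}(x)$, $c=v_{1/k}(x)$, the one-parameter inequality reads $a\le C\bigl(be^{(M-1)t}+ce^{-(K-M)t}\bigr)$ for all $t\ge0$; the right-hand side is minimised at some $t^{*}\ge0$ as soon as $b(M-1)\le c(K-M)$, which holds here \emph{for every} $x$ because $c\ge b$ and $K-M\ge M-1$. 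Evaluating at $t^{*}$ yields $a\le C_{1}b^{\theta_{K}}c^{1-\theta_{K}}$ with $\theta_{K}=\tfrac{K-M}{K-1}$ and $C_{1}$ independent of $x$; since $b\le c$ and $\theta_{K}\ge\theta^{*}$ we may lower the exponent to $\theta^{*}$. Hence $v_{1/m}\le C_{1}v_{1/n}^{\theta^{*}}v_{1/k}^{1-\theta^{*}}$ on $\R^{d}$, i.e.\ $(\condDN)$ with $\nu:=1/k$.

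\emph{Main obstacle.} The delicate point is Step~3: a single $(wQ)$-inequality, for one $K$, does not by itself pin down the right exponent, since at points $x$ where $v_{1/n}(x)$ and $v_{1/k}(x)$ are very unbalanced the optimal $t$ would be negative and one recovers only $v_{1/m}(x)\le C(v_{1/n}(x)+v_{1/k}(x))$. The way out is that the quantifier ``$\forall K\ge M$'' lets $\theta_{K}$ range over an unbounded set: taking $K\ge 2M$ forces the minimiser to be non-negative \emph{simultaneously for all} $x$ --- using only the ordering $v_{1/n}\le v_{1/k}$ --- after which the convexity lemma produces a genuine geometric-mean bound at $\theta_{K}$, which is transported to the prescribed $\theta^{*}$ by the same ordering. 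In the Roumieu case the identical mechanism works, with $K$ replaced by the adversarial $\omega$-index $m$, which one is free to take large, and with $\theta$ now read off from the exponents $\tfrac1m=\theta\tfrac1n+(1-\theta)\tfrac1k$.
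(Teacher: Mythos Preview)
The paper does not prove this lemma at all; it is quoted as \cite[Lemma~6.4]{D-N-WeighPLBSpUltraDiffFuncMultSp} and used as a black box, so there is no in-paper argument to compare against. Your route---reduce to a one-parameter inequality via the surjectivity of $t\mapsto\omega(t)$ onto $[0,\infty)$, then pass between the additive $(wQ)$-form and the multiplicative $(\condDN)$/$(\condooOmega)$-form by the weighted AM--GM inequality in one direction and by optimising $t\mapsto be^{(M-N)t}+ce^{-(K-M)t}$ in the other---is the standard mechanism behind such equivalences and is essentially correct.

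One small edge case deserves a word. In Step~3 you apply $(wQ)$ with $N=1$ and receive some $M\ge 1$; your optimisation argument tacitly uses $M-1>0$ (otherwise the first term in $f(t)$ is constant and $\theta_K=1$ is degenerate). If the $(wQ)$-output happens to be $M=1$, simply observe that for any $K\ge 2$ the one-parameter inequality at $t\to\infty$ forces $v_{1/m}\le C\,v_{1/n}$ for every $m$, and then $(\condDN)$ is trivial with $\lambda=1/n$ and $\nu=\mu$. The analogous remark applies in the Roumieu case if the produced $n$ equals~$1$. With this caveat your argument is complete.
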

	
\subsection{Weighted spaces of ultradifferentiable functions and the short-time Fourier transform}
\label{sec:GSSp}

Let $\omega$ be a weight function. For $h > 0$ and $v : \R^{d} \rightarrow (0, \infty)$ continuous, we define $\S^{\omega, h}_{v}$ as the Banach space consisting of all $\varphi \in C^{\infty}(\R^{d})$ such that
	\[ \|\varphi\|_{\S^{\omega, h}_{v}} := \sup_{\alpha \in \N^{d}} \sup_{x \in \R^{d}} |\varphi^{(\alpha)}(x)| v(x) \exp\left( - \frac{1}{h} \phi^{*}(h |\alpha|) \right) < \infty .  \]
Let $\V$ be a weight system. As in the introduction, we set 
	\[ \S^{(\omega)}_{(\V)} = \varprojlim_{h \rightarrow 0^{+}} \S^{\omega, h}_{v_{h}} , \qquad \S^{\{\omega\}}_{\{\V\}} = \varinjlim_{h \rightarrow \infty} \S^{\omega, h}_{v_{h}} . \]
Then, $\S^{(\omega)}_{(\V)}$ is a Fr\'{e}chet space and $\S^{\{\omega\}}_{\{\V\}}$ is a Hausdorff $(LB)$-space.  The  $(PLB)$-spaces  introduced in  \eqref{eq:ZSpaces} may be written as
	$$
		\Z^{(\omega)}_{(\V)} = \varprojlim_{h \rightarrow 0^{+}} \varinjlim_{\lambda \rightarrow 0^{+}} \S^{\omega, h}_{1/v_{\lambda}} , \qquad \Z^{\{\omega\}}_{\{\V\}} = \varprojlim_{\lambda \rightarrow \infty} \varinjlim_{h \rightarrow \infty} \Z^{\omega, h}_{1/v_{\lambda}}. 
	$$
%See \cite{D-N-WeighPLBSpUltraDiffFuncMultSp, D-N-V-NuclGSSpKernThm} for more information on these spaces.
 
 Next, we introduce the short-time Fourier transform;  we refer the reader to the book \cite{G-FoundTFAnal} for more information on this topic. 
  The translation and modulation operators are denoted by $T_{x} f(t) = f(t - x)$ and $M_{\xi} f(t) = e^{2 \pi i \xi \cdot t} f(t)$ for $x, \xi \in \R^{d}$. The \emph{short-time Fourier transform (STFT)} of $f \in L^{2}(\R^{d})$ with respect to a window  $\psi \in L^{2}(\R^{d})$ is defined as 
	\[ V_{\psi} f(x, \xi) = (f, M_{\xi} T_{x} \psi)_{L^{2}} = \int_{\R^{d}} f(t) \overline{\psi(t - x)} e^{- 2 \pi i \xi \cdot t} dt , \qquad (x, \xi) \in \R^{2d} .  \]
The following two structural properties of the STFT shall play a crucial role in this article:
\begin{equation}
\label{Plancherel}
(V_\psi f_1,V_\psi f_2)_{L^2(\R^{2d})} = \|\psi\|^2_{L^2}(f_1, f_2)_{L^2}, \qquad f_1, f_2 \in L^{2}(\R^{d}),
\end{equation}
and 
\begin{equation}
\label{repres}
|V_{\psi}(M_{\xi} T_{x} f)(y, \eta)| = |V_{\psi}(f)(y-x, \eta-\xi)|, \qquad  (x, \xi), (y,\eta) \in \R^{2d}.
\end{equation}
%We have that $\|V_{\psi} f\|_{L^2} = \|\psi \|_{L^2} \|f \|_{L^2}$. In particular, $V_\psi : L^{2}(\R^d) \rightarrow L^2(\R^{2d})$ is continuous. The adjoint of $V_{\psi}$ is given by the weak integral 
%	\[ V_{\psi}^{*} F = \int \int_{\R^{2d}} F(x, \xi) M_{\xi} T_{x} \psi dx d\xi , \qquad F \in L^{2}(\R^{2d}) . \]
%If  $\gamma \in L^{2}(\R^{d})$ is such that $(\gamma, \psi)_{L^{2}} \neq 0$, then
%	\[ \frac{1}{(\gamma, \psi)_{L^{2}}} V^{*}_{\gamma} \circ V_{\psi} = \id_{L^{2}(\R^{d})} . \]

We now recall a result from \cite{D-N-WeighPLBSpUltraDiffFuncMultSp} concerning the mapping properties of the STFT on weighted spaces of ultradifferentiable functions. To this end, we introduce a class of weighted spaces of continuous functions on the time-frequency space. Given $v, w : \R^{d} \rightarrow (0, \infty)$ continuous, we write $C_{v \otimes w}(\R^{2d}_{x, \xi})$ for the Banach space consisting of all $\Phi \in C(\R^{2d})$ such that
	\[ \|\Phi\|_{v \otimes w} = \sup_{(x, \xi) \in \R^{2d}} |\Phi(x, \xi)| v(x) w(\xi) < \infty . \]

	\begin{lemma}[{\cite[Lemma 7.1]{D-N-WeighPLBSpUltraDiffFuncMultSp}}]
		\label{l:STFTGS} 
		Let $\omega$ be a weight function. Let $v_{i}: \R^{d} \to (0, \infty)$, $i = 1,2,3,4$, be continuous functions such that
			\begin{equation}
			\label{v14}
			v_{2}(x + t) \leq C_{0} v_{1}(x) \widetilde{v}_{4}(t), \qquad x, t \in \R^{d},
			\end{equation}
		for some $C_{0} > 0$ and $v_{4} / v_{3} \in L^{1}$.  Let $h_{i} > 0$, $i= 1,2$, be such that
			\[ \frac{1}{h_{1}} \phi^{*}(h_{1}(y + 1)) + (\log\sqrt{d}) y \leq \frac{1}{h_{2}} \phi^{*}(h_{2} y)  + \log C_{1}, \qquad y \geq 0, \]
		for some $C_{1} > 0$.  Let $\psi \in \S^{\omega, h_{1}}_{v_{3}}$. Then, the mapping 
			\[ V_{\psi} : \S^{\omega, h_{1}}_{v_{1}} \rightarrow C_{v_{2} \otimes e^{\frac{1}{h_{2}} \omega}}(\R^{2d}_{x,\xi}) \]
		is well-defined and continuous.
	\end{lemma}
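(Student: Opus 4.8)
The statement is a quantitative continuity estimate, and the plan is to prove it by establishing the explicit bound
\[ \|V_\psi f\|_{v_2\otimes e^{\frac{1}{h_2}\omega}}\ \le\ C\,\|\psi\|_{\S^{\omega,h_1}_{v_3}}\,\|f\|_{\S^{\omega,h_1}_{v_1}},\qquad f\in\S^{\omega,h_1}_{v_1}, \]
with a constant $C>0$ depending only on $C_0$, $C_1$, $d$, $v_3$ and $v_4$. The idea is: decay of $V_\psi f$ in the $\xi$-variable is produced by integrating by parts in $t$ in the defining integral; decay in the $x$-variable comes from the weight relation \eqref{v14}; and the precise $\xi$-weight $e^{\frac{1}{h_2}\omega}$ is extracted by optimising over the order of the derivative, the hypothesis relating $h_1$ and $h_2$ being exactly what makes this optimisation succeed.

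First, for every $\alpha\in\N^d$ one integrates by parts $|\alpha|$ times to obtain
\[ (2\pi i\xi)^\alpha V_\psi f(x,\xi)=\int_{\R^d}\partial_t^\alpha\bigl[f(t)\overline{\psi(t-x)}\bigr]\,e^{-2\pi i\xi\cdot t}\,dt, \]
the boundary terms vanishing because $f(t)\overline{\psi(t-x)}$ and all of its $t$-derivatives belong to $L^1(\R^d_t)$, which follows from the estimate below. Leibniz' rule, the defining norms of $f$ and $\psi$, the inequality $\phi^*(a)+\phi^*(b)\le\phi^*(a+b)$ (valid since $y\mapsto\phi^*(y)/y$ is increasing) together with $|\beta|+|\alpha-\beta|=|\alpha|$ for $\beta\le\alpha$, and $\sum_{\beta\le\alpha}\binom{\alpha}{\beta}=2^{|\alpha|}$ yield
\[ \bigl|\partial_t^\alpha\bigl[f(t)\overline{\psi(t-x)}\bigr]\bigr|\le 2^{|\alpha|}\,\|f\|_{\S^{\omega,h_1}_{v_1}}\|\psi\|_{\S^{\omega,h_1}_{v_3}}\,\frac{1}{v_1(t)\,v_3(t-x)}\,\exp\!\Bigl(\tfrac{1}{h_1}\phi^*(h_1|\alpha|)\Bigr). \]
Next, substituting $s=t-x$ and reading \eqref{v14} with $x$ and $t$ replaced by $s+x$ and $-s$ gives $v_2(x)\le C_0\,v_1(s+x)\,v_4(s)$, hence
\[ v_2(x)\int_{\R^d}\frac{dt}{v_1(t)\,v_3(t-x)}=\int_{\R^d}\frac{v_2(x)}{v_1(s+x)\,v_3(s)}\,ds\le C_0\int_{\R^d}\frac{v_4(s)}{v_3(s)}\,ds=C_0\,\|v_4/v_3\|_{L^1}<\infty, \]
uniformly in $x\in\R^d$. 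Combining these two displays, and using $2/(2\pi)<1$, produces for every $\alpha\in\N^d$ and all $(x,\xi)\in\R^{2d}$ the estimate
\[ v_2(x)\,|\xi^\alpha|\,|V_\psi f(x,\xi)|\le C_0\|v_4/v_3\|_{L^1}\,\|f\|_{\S^{\omega,h_1}_{v_1}}\|\psi\|_{\S^{\omega,h_1}_{v_3}}\,\exp\!\Bigl(\tfrac{1}{h_1}\phi^*(h_1|\alpha|)\Bigr). \]

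To convert this into the weight $e^{\frac{1}{h_2}\omega(\xi)}$, fix $\xi$ with $|\xi|\ge1$, choose an index $j_0$ with $|\xi_{j_0}|=\max_{1\le j\le d}|\xi_j|\ge|\xi|/\sqrt d$, and apply the last display with $\alpha=ne_{j_0}$; since $|\xi^\alpha|=|\xi_{j_0}|^n\ge(|\xi|/\sqrt d)^n$, one gets for every $n\in\N$
\[ v_2(x)\,|\xi|^n\,|V_\psi f(x,\xi)|\le C_0\|v_4/v_3\|_{L^1}\,\|f\|_{\S^{\omega,h_1}_{v_1}}\|\psi\|_{\S^{\omega,h_1}_{v_3}}\,(\sqrt d)^n\exp\!\Bigl(\tfrac{1}{h_1}\phi^*(h_1 n)\Bigr). \]
As $(\phi^*)^*=\phi$, one has $\tfrac{1}{h_2}\omega(\xi)=\sup_{y\ge0}\{\,y\log|\xi|-\tfrac{1}{h_2}\phi^*(h_2 y)\,\}$; picking a maximiser $y^*$ and setting $n=\lceil y^*\rceil\le y^*+1$, and then using $\log|\xi|\ge0$, the monotonicity of $\phi^*$, and the hypothesis relating $h_1$ and $h_2$ — which is precisely tailored to absorb both the factor $(\sqrt d)^n$ and the shift $y^*\mapsto y^*+1$ — one obtains $(\sqrt d)^n\exp(\tfrac{1}{h_1}\phi^*(h_1 n))\,|\xi|^{-n}\le\sqrt d\,C_1\,e^{-\frac{1}{h_2}\omega(\xi)}$, which gives the asserted bound on $\{|\xi|\ge1\}$; the region $\{|\xi|<1\}$ reduces to the case $n=0$ together with $\omega(\xi)\le\omega(1)$, after enlarging $C$. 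For well-definedness, note that after the change of variables $u=t-x$ we have $V_\psi f(x,\xi)=e^{-2\pi i\xi\cdot x}\,\widehat{(T_{-x}f)\,\overline{\psi}}(\xi)$ with $(T_{-x}f)\overline{\psi}\in L^1(\R^d)$ (by the $\alpha=0$ version of the estimate above, in the form $\int v_1(u+x)^{-1}v_3(u)^{-1}\,du\le C_0\|v_4/v_3\|_{L^1}/v_2(x)$), and $x\mapsto(T_{-x}f)\overline{\psi}$ is continuous into $L^1(\R^d)$ by dominated convergence — the integrand being majorised, locally uniformly in $x$, by a constant multiple of $v_4/v_3\in L^1$ via $v_1(u+x)^{-1}\le C_0 v_4(u)/v_2(x)$ — so $V_\psi f$ is jointly continuous on $\R^{2d}$. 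Together with the estimate above, $V_\psi f\in C_{v_2\otimes e^{\frac{1}{h_2}\omega}}(\R^{2d}_{x,\xi})$, and $V_\psi$ is continuous.

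The only genuinely delicate step is the optimisation: turning the family of polynomial bounds — $|\xi|^n$-decay for every $n$ — into one bound with the weight $e^{\frac{1}{h_2}\omega(\xi)}$ requires passing from the continuous supremum defining the Young conjugate to integer orders of differentiation while controlling the geometric losses $(\sqrt d)^n$, and this is exactly the role of the somewhat unusual relation between $h_1$ and $h_2$, with its shift $y\mapsto y+1$ and its term $(\log\sqrt d)\,y$. The integration by parts, the Leibniz estimate, and the domination arguments for continuity are then routine once the weight bookkeeping coming from \eqref{v14} and $v_4/v_3\in L^1$ has been carried out.
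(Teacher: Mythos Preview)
The paper does not give its own proof of this lemma; it is quoted verbatim from \cite[Lemma~7.1]{D-N-WeighPLBSpUltraDiffFuncMultSp} and used as a black box. There is therefore nothing in the present paper to compare your argument against.

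That said, your proof is correct and is precisely the standard argument one expects behind such a citation: integration by parts in the STFT integral to trade powers of $\xi$ for $t$-derivatives, the Leibniz estimate combined with the superadditivity $\phi^*(a)+\phi^*(b)\le\phi^*(a+b)$, the weight bookkeeping via \eqref{v14} and $v_4/v_3\in L^1$, and finally the optimisation over the order $n$ using $(\phi^*)^*=\phi$ together with the hypothesis linking $h_1$ and $h_2$. The one place where your write-up is slightly informal is the justification of the integration by parts: saying ``boundary terms vanish because the derivatives are in $L^1$'' is morally right but, strictly, $L^1\cap C$ does not force decay at infinity. The clean way to phrase it is that for $h\in C^1(\R^d)$ with $h,\partial_j h\in L^1$ one has $\widehat{\partial_j h}=2\pi i\xi_j\,\widehat h$, proved by a cutoff approximation; iterating gives $(2\pi i\xi)^\alpha\widehat h=\widehat{\partial^\alpha h}$, which is exactly your displayed identity. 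With that small adjustment the argument is complete.
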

	Let $v : \R^{d} \rightarrow (0, \infty)$ be continuous. We denote by $C_{v}$  the Banach space consisting of all $f \in C(\R^{d})$ such that
	\[ \|f\|_{v} = \sup_{x \in \R^{d}} |f(x)| v(x) < \infty . \]
	\begin{lemma}
		\label{l:STFTC} 
		Let $v_{i}: \R^{d} \to (0, \infty)$, $i = 1,2,3,4$, be continuous functions such that \eqref{v14} holds
		for some $C_{0} > 0$ and $v_{4} / v_{3} \in L^{1}$. Let $\psi \in C_{v_{3}}$. Then, the mapping 
			\[ V_{\psi} : C_{v_{1}} \rightarrow C_{v_{2} \otimes 1}(\R^{2d}_{x,\xi}) \]
		is well-defined and continuous.
	\end{lemma}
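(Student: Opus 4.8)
The plan is to prove Lemma \ref{l:STFTC} by a direct pointwise estimate, mirroring the structure of the proof of Lemma \ref{l:STFTGS} but discarding all the smoothness/derivative machinery, since here we only deal with the Banach spaces $C_{v}$ of continuous functions. Thus there is no need for the Young conjugate $\phi^{*}$ or the auxiliary inequality on $h_1, h_2$ — the STFT of a bounded continuous function against an $L^1$-type window is again continuous, and the only issue is to control the weight in the time variable and to see that no weight in the frequency variable survives.

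First I would fix $f \in C_{v_1}$ and $\psi \in C_{v_3}$, and write out the definition
\[
V_{\psi} f(x, \xi) = \int_{\R^{d}} f(t) \overline{\psi(t - x)} e^{-2\pi i \xi \cdot t} \, dt .
\]
For the pointwise bound I would estimate $|f(t)| \leq \|f\|_{v_1} / v_1(t)$ and $|\psi(t-x)| \leq \|\psi\|_{v_3} / v_3(t-x)$. To bring in the target weight $v_2(x)$, the key move is to use \eqref{v14} in the form $v_2(x) \leq C_0 v_1(x - (t - x) + \dots)$ — more precisely, writing $x = t - (t-x)$ is not quite what is needed; instead I substitute into \eqref{v14} with the roles chosen so that $v_2(x) \le C_0 v_1(t)\, \widetilde v_4(t-x) = C_0 v_1(t) v_4(x - t)$, obtained from \eqref{v14} by setting the first argument to $t$ and the second to $x - t$ (so that $x = t + (x-t)$). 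Hence
\[
|V_{\psi} f(x, \xi)| v_2(x) \leq C_0 \|f\|_{v_1} \|\psi\|_{v_3} \int_{\R^{d}} \frac{v_4(x-t)}{v_3(t-x)} \, dt = C_0 \|f\|_{v_1}\|\psi\|_{v_3} \int_{\R^{d}} \frac{v_4(u)}{v_3(-u)}\,du,
\]
and since $v_4/v_3 \in L^1$ and $v_3(-u) = \widetilde v_3(u)$ — one should be slightly careful about whether the hypothesis is $v_4/v_3 \in L^1$ or $\widetilde v_4 / \widetilde v_3 \in L^1$, but these are the same statement up to the change of variables $u \mapsto -u$, so the integral is finite and independent of $(x,\xi)$. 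This simultaneously shows that the integral defining $V_\psi f$ converges absolutely (so $V_\psi f$ is well-defined and, by dominated convergence, continuous on $\R^{2d}$) and that $\|V_\psi f\|_{v_2 \otimes 1} \leq C \|f\|_{v_1}$ with $C = C_0 \|\psi\|_{v_3} \int v_4/v_3$, which is precisely continuity of $V_\psi : C_{v_1} \to C_{v_2 \otimes 1}(\R^{2d}_{x,\xi})$.

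The only genuinely delicate point — really a bookkeeping point rather than a true obstacle — is getting the substitution in \eqref{v14} oriented correctly so that the surviving integral is $\int v_4/v_3$ rather than something that fails to be integrable, and making sure the reflection $\widetilde v_4$ in \eqref{v14} matches the reflection implicit in the change of variables $t \mapsto t - x$; since $v_3 \geq 1$ this is harmless but should be checked. Continuity of $V_\psi f$ as a function on $\R^{2d}$ then follows from the dominating function $t \mapsto C_0^{-1}\|f\|_{v_1}\|\psi\|_{v_3}\, v_4(x-t)/v_3(t-x)$ being, after the estimate above, uniformly integrable in a neighbourhood of any $(x_0,\xi_0)$, so the dominated convergence theorem applies. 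I expect this lemma to be essentially a simplified special case of Lemma \ref{l:STFTGS} (take $h_1, h_2$ irrelevant and only the $\alpha = 0$ term), included separately because the later application needs the STFT on the continuous-function scale underlying $\Z^{[\omega]}_{[\V]}$ rather than on the ultradifferentiable scale.
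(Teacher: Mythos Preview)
Your argument is correct and is precisely the ``straightforward'' computation the paper leaves to the reader; there is no alternative approach to compare. One bookkeeping correction: substituting the first variable by $t$ and the second by $x-t$ in \eqref{v14} gives $v_2(x)\le C_0 v_1(t)\,\widetilde v_4(x-t)=C_0 v_1(t)\,v_4(t-x)$ (not $v_4(x-t)$), so the surviving integral is $\int v_4(t-x)/v_3(t-x)\,dt=\|v_4/v_3\|_{L^1}$ directly, and no appeal to $v_3\ge 1$ (which is \emph{not} assumed here) or to reflected quotients is needed.
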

\begin{proof}
The proof is straightforward and therefore left to the reader.
\end{proof}
	
	We shall also need the following result from  \cite{D-N-WeighPLBSpUltraDiffFuncMultSp}.

	\begin{lemma}[{\cite[Lemma 7.2]{D-N-WeighPLBSpUltraDiffFuncMultSp}}]
		\label{l:TFSGS}
			Let $\omega$ be a weight function. Choose $C_0,A > 0$ such that
\begin{equation}
\omega(2\pi t) \leq A \omega(t) + \log C_0, \qquad t \geq 0.
\label{eq:ConstantL}
\end{equation}
Let $v_i: \R^d \to (0,\infty)$, $i = 1,2,3$, be continuous functions such that
\begin{equation}
v_2(x+t) \leq C_1v_1(x) v_3(t), \qquad x,t \in \R^d,
\label{in-2}
\end{equation}
for some $C_1 > 0$. Let $h_i > 0$, $i= 1,2$, be such that
\begin{equation}
\frac{1}{h_1} \phi^* (h_1y) + (\log 2) y \leq \frac{1}{h_2} \phi^* (h_2y) 
+ \log C_2, \qquad y \geq 0, 
\label{in-3}
\end{equation}
for some $C_2 > 0$. 
Then, there is a $C > 0$ such that
$$
\| M_\xi T_x \psi \|_{\mathcal{S}^{\omega,h_2}_{v_2}} \leq C \|  \psi \|_{\mathcal{S}^{\omega,h_1}_{v_1}} v_3(x) e^{\frac{A}{h_1}\omega(\xi)}, \qquad (x,\xi) \in \R^{2d},
$$
for all $\psi \in \mathcal{S}^{\omega,h_1}_{v_1}$.
		\end{lemma}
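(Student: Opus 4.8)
The plan is a direct estimate combining the Leibniz rule with Young's inequality for the pair $(\phi,\phi^{*})$. Writing $(M_{\xi}T_{x}\psi)(t)=e^{2\pi i\xi\cdot t}\psi(t-x)$, I would first apply the Leibniz rule,
\[
(M_{\xi}T_{x}\psi)^{(\alpha)}(t)=\sum_{\beta\leq\alpha}\binom{\alpha}{\beta}(2\pi i)^{|\beta|}\xi^{\beta}e^{2\pi i\xi\cdot t}\,\psi^{(\alpha-\beta)}(t-x),
\]
so that $|(M_{\xi}T_{x}\psi)^{(\alpha)}(t)|\leq\sum_{\beta\leq\alpha}\binom{\alpha}{\beta}(2\pi|\xi|)^{|\beta|}|\psi^{(\alpha-\beta)}(t-x)|$. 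Using \eqref{in-2} in the form $v_{2}(t)\leq C_{1}v_{1}(t-x)v_{3}(x)$ and the definition of $\|\cdot\|_{\mathcal{S}^{\omega,h_{1}}_{v_{1}}}$ to estimate $|\psi^{(\alpha-\beta)}(t-x)|v_{1}(t-x)\leq\|\psi\|_{\mathcal{S}^{\omega,h_{1}}_{v_{1}}}\exp(\frac{1}{h_{1}}\phi^{*}(h_{1}|\alpha-\beta|))$, the task reduces to bounding
\[
\sup_{\alpha\in\N^{d}}\sum_{\beta\leq\alpha}\binom{\alpha}{\beta}(2\pi|\xi|)^{|\beta|}\exp\Big(\tfrac{1}{h_{1}}\phi^{*}(h_{1}|\alpha-\beta|)-\tfrac{1}{h_{2}}\phi^{*}(h_{2}|\alpha|)\Big)
\]
by a multiple of $e^{\frac{A}{h_{1}}\omega(\xi)}$; multiplying the resulting bound by the factors $C_{1}\|\psi\|_{\mathcal{S}^{\omega,h_{1}}_{v_{1}}}v_{3}(x)$ pulled out above then gives the assertion.

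The next step is to absorb the factor $(2\pi|\xi|)^{|\beta|}$. For $2\pi|\xi|\geq 1$ (the range $2\pi|\xi|<1$ contributes harmlessly, as then $(2\pi|\xi|)^{|\beta|}\leq 1$), I would apply Young's inequality to the convex function $x\mapsto\frac{1}{h_{1}}\phi(h_{1}x)$, whose Young conjugate is $y\mapsto\frac{1}{h_{1}}\phi^{*}(y)$, with $x=h_{1}^{-1}\log(2\pi|\xi|)$ and $y=h_{1}|\beta|$, obtaining
\[
(2\pi|\xi|)^{|\beta|}\leq\exp\Big(\tfrac{1}{h_{1}}\omega(2\pi|\xi|)+\tfrac{1}{h_{1}}\phi^{*}(h_{1}|\beta|)\Big)\leq C_{0}^{1/h_{1}}\exp\Big(\tfrac{A}{h_{1}}\omega(\xi)+\tfrac{1}{h_{1}}\phi^{*}(h_{1}|\beta|)\Big),
\]
where the second inequality uses \eqref{eq:ConstantL} together with the radiality of $\omega$. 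After inserting this, it remains to bound $\sum_{\beta\leq\alpha}\binom{\alpha}{\beta}\exp\big(\frac{1}{h_{1}}[\phi^{*}(h_{1}|\beta|)+\phi^{*}(h_{1}|\alpha-\beta|)]-\frac{1}{h_{2}}\phi^{*}(h_{2}|\alpha|)\big)$ by a constant.

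For this final sum I would use that $\phi^{*}$ is convex with $\phi^{*}(0)\leq 0$ (one has $\phi^{*}(0)=-\omega(1)$), hence superadditive on $[0,\infty)$, so that $\phi^{*}(h_{1}|\beta|)+\phi^{*}(h_{1}|\alpha-\beta|)\leq\phi^{*}(h_{1}|\alpha|)$ because $|\beta|+|\alpha-\beta|=|\alpha|$ whenever $\beta\leq\alpha$. Combining this with $\sum_{\beta\leq\alpha}\binom{\alpha}{\beta}=2^{|\alpha|}$, the sum is dominated by $\exp\big((\log 2)|\alpha|+\frac{1}{h_{1}}\phi^{*}(h_{1}|\alpha|)-\frac{1}{h_{2}}\phi^{*}(h_{2}|\alpha|)\big)$, which is at most $C_{2}$ by \eqref{in-3} (applied with $y=|\alpha|$). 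Tracking the constants yields the claim with $C=C_{1}C_{0}^{1/h_{1}}C_{2}$.

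I do not expect a genuine obstacle. The only points needing care are the choice of scaling in Young's inequality (so that precisely $\frac{A}{h_{1}}\omega(\xi)$, and not $A\omega(\xi)$, is produced) and the verification that $\phi^{*}(0)\leq 0$, which is what legitimates the superadditivity of $\phi^{*}$; the hypothesis \eqref{in-3} is tailored precisely to absorb the combinatorial factor $2^{|\alpha|}$ arising from the Leibniz rule.
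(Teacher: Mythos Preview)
Your argument is correct. The paper does not give its own proof of this lemma; it is simply quoted from \cite[Lemma~7.2]{D-N-WeighPLBSpUltraDiffFuncMultSp}, and your direct computation via the Leibniz rule, Young's inequality for $\phi/\phi^{*}$, and superadditivity of $\phi^{*}$ is exactly the standard route one expects for such an estimate. One cosmetic point: when you split into $2\pi|\xi|\geq 1$ and $2\pi|\xi|<1$, your uniform display with the factor $\exp\bigl(\tfrac{1}{h_{1}}\phi^{*}(h_{1}|\beta|)\bigr)$ only applies to the first range; in the second you are really using $(2\pi|\xi|)^{|\beta|}\leq 1$ together with the monotonicity $\phi^{*}(h_{1}|\alpha-\beta|)\leq\phi^{*}(h_{1}|\alpha|)$, which gives the same bound $C_{2}$ without the superadditivity step. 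This affects only the exact value of the constant (one may need $\max(1,C_{0}^{1/h_{1}})$ rather than $C_{0}^{1/h_{1}}$), not the argument.
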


\section{Barrelled $(PLB)$-spaces}
\label{sec:ProjectiveSpectra}

In this short section, we explain the abstract result concerning barrelled $(PLB)$-spaces that will be used in the proof of the implication $(iii) \Rightarrow (i)$ in Theorem \ref{t:main}. 

For us, a \emph{projective spectrum} is a decreasing sequence $(X_{N})_{N \in \N}$ of locally convex spaces such that the inclusion mapping  $X_{N+1} \to  X_{N}$ is continuous for each $N \in \N$. Set 
$X=\varprojlim_{N \in \N} X_{N}$. The spectrum $(X_{N})_{N \in \N}$ is called \emph{strongly reduced} if
			\[ \forall N \in \N ~ \exists M \geq N ~: ~ X_{M} \subseteq \overline{X}^{X_{N}} . \]

%For $(PLB)$-spaces whose projective limit is strongly reduced we then find the following result originally due to Vogt \cite{V-LecturesProjSpecLBSp}. 	
	\begin{theorem}[{\cite[Theorem 3.3.6]{W-DerivFunctorsFuncAnal}}]
		\label{t:BarrelledImpliesP*2}
		Let $(X_{N})_{N \in \N}$ be a strongly reduced projective spectrum of Hausdorff $(LB)$-spaces $X_{N} = \varinjlim_{n \in \N} X_{N, n}$.  If $\varprojlim_{N \in \N} X_{N} $ is barrelled, then
			\begin{gather*}
				\forall N \in \N ~ \exists M \geq N, n \in \N ~ \forall K \geq M, m \in \N ~ \exists k \in \N, C > 0~\forall y \in X^{\prime}_{N} ~ :\\
				 \|y \|^{*}_{X_{M, m}} \leq C \left( \| y \|^{*}_{X_{N, n}} + \| y \|^{*}_{X_{K, k}} \right) ,
			\end{gather*}
		where $\|\cdot\|^{*}_{X_{L, l}}$ denotes the dual norm of $\| \cdot \|_{X_{L, l}}$,  $L, l \in \N$.
	\end{theorem}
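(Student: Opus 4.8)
The plan is to argue by contrapositive, using the fact that a locally convex space is barrelled precisely when every $\sigma(X',X)$-bounded subset of its dual is equicontinuous. Write $X=\varprojlim_N X_N$, let $\iota_N\colon X\to X_N$ and $\iota^N_K\colon X_K\to X_N$ ($K\geq N$) be the canonical maps, let $B_{N,n}$ be the closed unit ball of $X_{N,n}$, assume as usual that $\|y\|^*_{X_{N,n}}\leq\|y\|^*_{X_{N,n+1}}$, and for $y\in X_N'$, $K\geq N$ read $\|y\|^*_{X_{K,k}}$ as $\|y\circ\iota^N_K\|^*_{X_{K,k}}$. Since $\iota_{N}=\iota^{N}_{N'}\circ\iota_{N'}$ for $N\leq N'$ and $0$-neighbourhoods of a projective limit pull back to finite intersections, for any fixed $N_0$ the sets $\iota_N^{-1}(V)$ with $N\geq N_0$ and $V$ a closed absolutely convex $0$-neighbourhood of $X_N$ form a base of $0$-neighbourhoods of $X$; hence it suffices to produce a $\sigma(X',X)$-bounded $B\subseteq X'$ that fails to be bounded on any such $\iota_N^{-1}(V)$.

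Assume the estimate fails and fix $N_0$ witnessing this, i.e.
\begin{gather*}
\forall M\geq N_0,\ n\in\N\ \ \exists K(M,n)\geq M,\ m(M,n)\in\N\ \ \forall k\in\N,\ C>0\ \ \exists y\in X_{N_0}':\\
\|y\|^*_{X_{M,m(M,n)}}>C\bigl(\|y\|^*_{X_{N_0,n}}+\|y\|^*_{X_{K(M,n),k}}\bigr).
\end{gather*}
Partition $\N=\bigsqcup_{p\geq N_0}S_p$ into infinite sets. For each $p$ choose, by strong reducedness, $M_p\geq p$ with $\iota^{p}_{M_p}(X_{M_p})\subseteq\overline{\iota_{p}(X)}^{X_{p}}$, put $n_p:=p$, and set $K_p:=K(M_p,n_p)$, $m_p:=m(M_p,n_p)$. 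For each $g\in S_p$ apply the failing condition with $k:=g$ and a constant $C_g$ so large that, after rescaling $y$, the resulting functional $y_g\in X_{N_0}'$ satisfies
\[
\|y_g\|^*_{X_{N_0,p}}+\|y_g\|^*_{X_{K_p,g}}\leq 2^{-g}\qquad\text{and}\qquad \|y_g\|^*_{X_{M_p,m_p}}>2^{g}.
\]
Put $\phi_g:=y_g\circ\iota_{N_0}\in X'$ and $B:=\{\phi_g\}$.

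To see that $B$ is $\sigma(X',X)$-bounded, fix $x\in X$ and pick $p_0$ with $\iota_{N_0}(x)\in X_{N_0,p_0}$. For $g\in S_p$ with $p\geq p_0$ we have $\|y_g\|^*_{X_{N_0,p_0}}\leq\|y_g\|^*_{X_{N_0,p}}\leq 2^{-g}$, so $|\phi_g(x)|\leq 2^{-g}\|\iota_{N_0}(x)\|_{X_{N_0,p_0}}\leq\|\iota_{N_0}(x)\|_{X_{N_0,p_0}}$, a bound uniform over all these classes. For each of the finitely many $p$ with $p<p_0$, write $\phi_g(x)=(y_g\circ\iota^{N_0}_{K_p})(\iota_{K_p}(x))$; since $\iota_{K_p}(x)\in X_{K_p,q_p}$ for some $q_p$, for $g\geq q_p$ we get $\|y_g\|^*_{X_{K_p,q_p}}\leq\|y_g\|^*_{X_{K_p,g}}\leq 2^{-g}$ and hence $|\phi_g(x)|\leq 2^{-g}\|\iota_{K_p}(x)\|_{X_{K_p,q_p}}$, so this class contributes a finite supremum. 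Taking the maximum over these finitely many exceptional classes gives $\sup_g|\phi_g(x)|<\infty$. It is precisely here that the free parameter $k$ in the negated condition is exploited: letting the ``deep'' step grow with $g$ is what tames the exceptional classes.

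Finally, $B$ is not equicontinuous. Fix $N\geq N_0$ and a closed absolutely convex $0$-neighbourhood $V$ of $X_N$, and choose $\varepsilon>0$ with $\varepsilon\,\iota^N_{M_N}(B_{M_N,m_N})\subseteq V$ — possible since $\iota^N_{M_N}(B_{M_N,m_N})$ is bounded in $X_N$ and $0$-neighbourhoods absorb bounded sets. Using $\phi_g=(y_g\circ\iota^{N_0}_N)\circ\iota_N$, that $\iota^N_{M_N}(X_{M_N})\subseteq\overline{\iota_N(X)}^{X_N}$ by strong reducedness, and that for a continuous linear functional and a closed convex $0$-neighbourhood the supremum over the neighbourhood is unchanged upon passing to the closure of a dense subspace, we obtain for $g\in S_N$
\begin{align*}
\sup_{x\in\iota_N^{-1}(V)}|\phi_g(x)| &= \sup_{w\in V\cap\iota_N(X)}\bigl|(y_g\circ\iota^{N_0}_N)(w)\bigr| = \sup_{w\in V\cap\overline{\iota_N(X)}^{X_N}}\bigl|(y_g\circ\iota^{N_0}_N)(w)\bigr| \\
&\geq \varepsilon\,\|y_g\|^*_{X_{M_N,m_N}} > \varepsilon\,2^{g}.
\end{align*}
As $S_N$ is infinite, the left-hand side is unbounded over $g$, so $B$ is not bounded on $\iota_N^{-1}(V)$; since $N$ and $V$ were arbitrary, $B$ is not equicontinuous and $X$ is not barrelled. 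The crux of the argument — and the main obstacle to overcome — is the tension between the two requirements: a $\sigma(X',X)$-bounded family factoring through a single step is automatically equicontinuous, so the ``bumps'' $\phi_g$ must be distributed over the cofinal family of levels $M_N$ and each kept at one fixed step $m_p$ so that strong reducedness can be applied, while $\sigma(X',X)$-boundedness must then be rescued by additionally using the unbounded choice of the deep step $k$ for the finitely many classes not already controlled by the reference step $n_p$.
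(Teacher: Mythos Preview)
The paper does not give its own proof of this theorem; it is simply quoted as \cite[Theorem 3.3.6]{W-DerivFunctorsFuncAnal} and used as a black box. So there is no in-paper argument to compare against.

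Your proof is correct and self-contained. The one step that deserves a word of justification is the equality
\[
\sup_{w\in V\cap\iota_N(X)}\bigl|(y_g\circ\iota^{N_0}_N)(w)\bigr|
=\sup_{w\in V\cap\overline{\iota_N(X)}^{X_N}}\bigl|(y_g\circ\iota^{N_0}_N)(w)\bigr|,
\]
since $V\cap\overline{\iota_N(X)}$ need not equal $\overline{V\cap\iota_N(X)}$. It does hold, though: for $w\in V\cap\overline{\iota_N(X)}$ and any $\rho>1$ one has $w/\rho\in\operatorname{int}(V)$ (because for an absolutely convex $0$-neighbourhood $V$ and $0<t<1$, $tV\subseteq\operatorname{int}(V)$), so a net $w_\alpha\in\iota_N(X)$ converging to $w$ eventually satisfies $w_\alpha/\rho\in V\cap\iota_N(X)$, giving $|f(w)|\leq\rho\cdot\sup_{V\cap\iota_N(X)}|f|$ for all $\rho>1$. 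You state this as a fact; it would not hurt to include this one-line reason.

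As for how this compares with the cited source: Wengenroth's treatment embeds the statement in the machinery of the derived projective limit functor, passing through $\operatorname{Proj}^1$-vanishing and the condition he calls $(P^*_2)$, with barrelledness entering via retractivity/limit arguments. Your argument is more direct: you bypass $\operatorname{Proj}^1$ entirely and work straight with the Banach--Steinhaus characterisation of barrelledness, building by hand a $\sigma(X',X)$-bounded family that is not equicontinuous. The key device --- partitioning $\N$ over a cofinal family of levels $M_p$, fixing the ``middle'' step $m_p$ per level so that strong reducedness applies, and letting the ``deep'' step $k=g$ grow to secure weak boundedness on the finitely many exceptional classes --- is exactly the right idea and is essentially what underlies the abstract proof as well, but your exposition makes the mechanism transparent without invoking any homological language.
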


\section{Proof of Theorem \ref{t:main}}
\label{sec:ProofMainThm}

The goal of this section is to prove Theorem \ref{t:main}. As already stated in the introduction, we only have to show the implication $(iii) \Rightarrow (i)$. We fix a weight function $\omega$ and a weight system $\V$ satisfying $[M]$ and $[N]$ such that $\S^{[\omega]}_{[\V]} \neq \{ 0\}$. Furthermore, we choose  $\psi \in \S^{[\omega]}_{[\V]} \cap \S^{[\omega]}_{[\widetilde{\V}]}$ with $\| \psi\|_{L^{2}} = 1$; the  existence of such a function is guaranteed by \cite[Lemma 7.5]{D-N-WeighPLBSpUltraDiffFuncMultSp}.  Let $S,A \geq 1$ be such that \eqref{eq:WeighFuncModerate} and \eqref{eq:ConstantL} hold.

 We now introduce two sequences $(h_{n})_{n \in \N}$ and $(\lambda_{n})_{n \in \N}$ of positive real numbers, whose definition depends on whether we are in the Beurling or in the Roumieu case: \\
 \emph{Beurling case}: Set $h_{0} = \lambda_{0} = 1$. By Lemma \ref{M12}$(i)$, we can inductively choose $h_{n+1} > 0$ such that $h_{n + 1} < h_n/\max\{A,S\}$ and
			%\[ \frac{1}{2L h_{n + 1}} \phi^{*}\left(2L h_{n + 1} (y + 1) \right) + \log (2 \sqrt{d}) y \leq \frac{1}{h_{n}} \phi^{*}(h_{n} y) + \log C , \qquad y \geq 0 , \]
			\[ \frac{1}{h_{n + 1}} \phi^{*}\left(h_{n + 1} (y + 1) \right) + \log (\max\{2, \sqrt{d} \} y) \leq \frac{1}{h_{n}} \phi^{*}(h_{n} y) + \log C , \qquad y \geq 0 , \]
		for some $C = C_n > 0$. By \eqref{eq:improvedN} and $(M)$, we  can inductively choose $\lambda_{n+1} > 0$ such that $\lambda_{n + 1} < \lambda_n$,
			\[ \frac{v_{\lambda_{n}}}{v_{\lambda_{n + 1}}} \in L^{1} \cap C_{0} , \]
		and
			\[ v_{\lambda_{n}}(x + t) \leq C' v_{\lambda_{n + 1}}(x) v_{\lambda_{n + 1}}(t) , \qquad \forall x, t \in \R^{d} , \]
		for some $C' = C'_n > 0$. We may assume without loss of generality that $h_n \searrow 0$ and $\lambda_n \searrow 0$ as $n \to \infty$. \\
 \emph{Roumieu case}: Pick $h_{0}, \lambda_{0} > 0$ such that $\psi \in \S^{\omega, h_{0}}_{v_{\lambda_{0}}} \cap \S^{\omega, h_{0}}_{\widetilde{v}_{\lambda_{0}}}$. By Lemma \ref{M12}$(ii)$, we can inductively choose $h_{n+1} > 0$ such that $h_{n + 1} > \max\{A,S\} h_{n}$ and
			%\[ \frac{1}{2L h_{n}} \phi^{*}\left(2L h_{n} (y + 1) \right) + \log (2 \sqrt{d}) y \leq \frac{1}{h_{n + 1}} \phi^{*}(h_{n + 1} y) + \log C , \qquad y \geq 0 , \]
			\[ \frac{1}{h_{n}} \phi^{*}\left(h_{n} (y + 1) \right) + \log (\max\{2, \sqrt{d}\} y) \leq \frac{1}{h_{n + 1}} \phi^{*}(h_{n + 1} y) + \log C , \qquad y \geq 0 , \]
		for some $C = C_n > 0$. By \eqref{eq:improvedN} and $\{M\}$, we  can inductively choose $\lambda_{n+1} > 0$ such that $\lambda_{n + 1} > \lambda_{n}$,
			\[ \frac{v_{\lambda_{n + 1}}}{v_{\lambda_{n}}} \in L^{1} \cap C_{0} , \]
		and
			\[ v_{\lambda_{n + 1}}(x + t) \leq C' v_{\lambda_{n}}(x) v_{\lambda_{n}}(t) , \qquad \forall x, t \in \R^{d} , \]
		for some $C' = C_ n > 0$. We may assume without loss of generality that $h_n \nearrow \infty$ and $\lambda_n \nearrow \infty$ as $n \to \infty$. 

For each $N \in \N$ we define the Hausdorff $(LB)$-spaces
	\[ \Z^{\omega, h_{N}}_{(\V)} = \varinjlim_{n \in \N} \S^{\omega, h_{N}}_{1/v_{\lambda_{n}}} , \qquad \Z^{\{\omega\}}_{v_{\lambda_{N}}} = \varinjlim_{n \in \N} \S^{\omega, h_{n}}_{1/v_{\lambda_{N}}} . \]
Note that
	%\begin{equation}
	%	\label{eq:ZSpectra} 
		$$
		\Z^{(\omega)}_{(\V)} = \varprojlim_{N \in \N} \Z^{\omega, h_{N}}_{(\V)} , \qquad \Z^{\{\omega\}}_{\{\V\}} = \varprojlim_{N \in \N} \Z^{\{\omega\}}_{v_{\lambda_{N}}}.
		$$ 
	%\end{equation}

We will apply Theorem \ref{t:BarrelledImpliesP*2} to the projective spectra $( \Z^{\omega, h_{N}}_{(\V)})_{N \in \N}$ and $(\Z^{\{\omega\}}_{v_{\lambda_{N}}})_{N \in \N}$ to show the implication $(iii) \Rightarrow (i)$ in Theorem \ref{t:main}. Therefore, we start by showing that these spectra are strongly reduced. 	
	\begin{lemma}
		\label{l:ZStronglyReduced}
		The projective spectra $( \Z^{\omega, h_{N}}_{(\V)})_{N \in \N}$ and $(\Z^{\{\omega\}}_{v_{\lambda_{N}}})_{N \in \N}$ are strongly reduced.
	\end{lemma}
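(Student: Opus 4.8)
The goal is to show that both projective spectra are strongly reduced, i.e., that for each $N$ there is an $M \geq N$ such that $\Z^{\omega, h_M}_{(\V)}$ (resp. $\Z^{\{\omega\}}_{v_{\lambda_M}}$) is contained in the closure of $\Z^{[\omega]}_{[\V]}$ taken inside $\Z^{\omega, h_N}_{(\V)}$ (resp. $\Z^{\{\omega\}}_{v_{\lambda_N}}$). Since the $(PLB)$-spaces $\Z^{[\omega]}_{[\V]}$ contain the test-function-type spaces $\S^{[\omega]}_{[\V]}$ (or at least a rich supply of rapidly decreasing ultradifferentiable functions coming from truncations of elements of the linking Banach spaces), the natural strategy is a truncation/mollification argument: given $\varphi \in \Z^{\omega, h_M}_{(\V)}$, approximate it in the coarser norm $\|\cdot\|_{\Z^{\omega, h_N}_{(\V)}}$ by functions lying in $\Z^{[\omega]}_{[\V]}$.

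The concrete plan is as follows. Fix $N$ and pick $M = N+1$ (with the chain of constants already set up in the inductive construction of $(h_n)$ and $(\lambda_n)$, this buys exactly one ``index of room'' in both the order/derivative scale, via the inequality for $\phi^*$, and in the weight scale, via $v_{\lambda_n}/v_{\lambda_{n+1}} \in L^1 \cap C_0$ and the submultiplicativity estimate). Take $\varphi \in \Z^{\omega, h_M}_{(\V)}$, so $\varphi \in \S^{\omega, h_M}_{1/v_{\lambda_n}}$ for some $n$. Multiply $\varphi$ by a cutoff $\chi_R$ that is $1$ on the ball of radius $R$, supported in the ball of radius $2R$, with all derivatives controlled uniformly in $R$ — here one should use a cutoff from the relevant Gelfand–Shilov / Beurling–Roumieu ultradifferentiable class so that $\chi_R \varphi$ lands in $\S^{[\omega]}_{[\V]} \subseteq \Z^{[\omega]}_{[\V]}$; the existence of such cutoffs is standard when $\omega$ is non-quasianalytic, and in the quasianalytic case one instead convolves or uses the STFT-based approximation already available in the paper's toolkit. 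Then estimate $\|\varphi - \chi_R \varphi\|_{\Z^{\omega, h_N}_{(\V)}}$: on the region $|x| \leq R$ the difference vanishes together with enough derivatives, and on $|x| \geq R$ one bounds $\varphi^{(\alpha)}(x)/v_{\lambda_n}(x) \cdot e^{-\phi^*(h_N|\alpha|)/h_N}$ by $\|\varphi\|_{\S^{\omega,h_M}_{1/v_{\lambda_n}}}$ times $v_{\lambda_n}(x)^{-1} v_{\lambda_{?}}(x)$ — exploiting that passing from $h_M$ to $h_N$ and from $\lambda_n$ to a slightly larger index produces a factor that is $C_0$, hence tends to $0$ as $|x| \to \infty$ — so that the supremum over $|x| \geq R$ goes to $0$ as $R \to \infty$. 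The Roumieu case is symmetric with the roles of ``small'' and ``large'' indices interchanged and using $\{M\}$, $\{N\}$.

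The main obstacle is the quasianalytic case, where compactly supported cutoffs in the class are unavailable, so the naive truncation does not produce elements of $\S^{[\omega]}_{[\V]}$. The cleanest way around this — and the one consistent with the paper's announced method — is to run the approximation through the short-time Fourier transform: one uses the inversion formula $\varphi = \int V_\psi \varphi(x,\xi)\, M_\xi T_x \psi \, dx\, d\xi$, truncates the integral to a compact set in $(x,\xi)$, and notes that the truncated integral is a finite-type superposition of time-frequency shifts of $\psi \in \S^{[\omega]}_{[\V]} \cap \S^{[\omega]}_{[\widetilde\V]}$, hence lies in $\Z^{[\omega]}_{[\V]}$; Lemmas \ref{l:STFTGS} and \ref{l:TFSGS}, together with the decay of $V_\psi\varphi$ guaranteed by the mapping properties there, then give convergence in the coarser norm $\|\cdot\|_{\Z^{\omega,h_N}_{(\V)}}$ exactly when one has the one-step room $M = N+1$ built into the constants $(h_n)$, $(\lambda_n)$. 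This reduces the lemma to a careful but routine estimate of a tail integral, which I would not write out in full here.
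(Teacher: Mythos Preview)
Your overall structure --- take $M=N+1$ and show that each $\varphi$ in a step-$M$ Banach piece can be approximated in the step-$N$ norm by elements of $\Z^{[\omega]}_{[\V]}$ --- matches the paper. However, your first (cutoff) approach has a gap beyond quasianalyticity: in the Beurling case, $\chi_R\varphi$ is compactly supported but inherits only the $h_{N+1}$-regularity of $\varphi$, so it does \emph{not} lie in $\Z^{(\omega)}_{(\V)}$, which demands the derivative estimates for \emph{every} $h>0$. A separate mollification would still be needed. Your STFT-reconstruction fallback does avoid this problem, since the regularity of the truncated integral comes from $\psi$ rather than from $\varphi$; with enough bookkeeping via Lemmas~\ref{l:STFTGS} and~\ref{l:TFSGS} that route can be pushed through.

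The paper, however, takes a more elementary and uniform route that avoids the STFT entirely for this lemma. Using only the hypothesis $\S^{[\omega]}_{[\V]}\neq\{0\}$, one fixes $\chi\in\S^{[\omega]}_{[\V]}$ with $\int\chi=1$, picks $\eta\in\mathcal D(\R^d)$ with $\eta(0)=1$, and uses the convolutions $\chi\ast\eta_j$, $\eta_j(x)=\eta(x/j)$, as \emph{approximate-one multipliers}: they converge to $1$ while carrying derivative bounds inherited from $\chi$ (hence from the class, regardless of quasianalyticity). In the Roumieu case $(\chi\ast\eta_j)\varphi$ already lies in $\Z^{\{\omega\}}_{\{\V\}}$ and converges in the required norm; in the Beurling case one additionally convolves with the approximate identity $\rho_k=k^d\widehat\theta(k\,\cdot\,)$, $\theta\in\mathcal D(\R^d)$, to recover the full Beurling regularity. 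This is a short direct construction, whereas your STFT scheme, while viable, requires threading several mapping lemmas and a tail estimate for the reconstruction integral.
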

	
	\begin{proof}
 \emph{Beurling case}: It suffices to show that for all $n,N \in \N$ 
	$$
	\S^{\omega,h_{N+1}}_{1/v_{\lambda_{n}}} \subseteq \overline{\Z^{(\omega)}_{(\V)}}^{\S^{\omega,h_{N}}_{1/v_{\lambda_{n+1}}}}.
	$$
	Let $\varphi \in \S^{\omega,h_{N+1}}_{1/v_{\lambda_{n}}}$ be arbitrary. Choose $\chi \in \S^{(\omega)}_{(\V)}$ such that $\int_{\R^d}\chi(x)dx = 1$. Pick $\eta \in \mathcal{D}(\R^d)$ such that $\eta(0) = 1$ and set $\eta_j(x) = \eta(x/j)$ for $j \in \ZZ_+$.  Let $\theta \in \mathcal{D}(\R^d)$ be such that $\theta(0) = 1$ and consider its Fourier transform $\widehat{\theta}(\xi) =\int_{\R^d} \theta(x) e^{-2\pi i \xi \cdot x} dx$. Set $\rho_k(x) = k^d\widehat{\theta}(kx)$ for $k \in \ZZ_+$. We leave it to the reader to verify that $\varphi_{j,k} = ((\chi \ast \eta_j) \varphi) \ast \rho_k \in \mathcal{Z}^{(\omega)}_{(\V)}$ for all $j,k \in \ZZ_+$ and that for all $\varepsilon > 0$ there are $j_0,k_0 \in \ZZ_+$ such that $\| \varphi- \varphi_{j_0,k_0} \|_{\S^{\omega,h_{N}}_{1/v_{\lambda_{n+1}}}} \leq \varepsilon$. This shows the result. \\
	\noindent \emph{Roumieu case}: Choose $\chi \in \S^{\{\omega\}}_{\{\V\}}$ such that $\int_{\R^d}\chi(x)dx = 1$. Let $n_0, N_0 \in \N$ be such that $\chi \in \S^{\omega, h_{n_0}}_{v_{\lambda_{N_0}}}$.  It suffices to show that for all $n,N \in \N$ with $N \geq N_0 + 1$ and $n \geq n_0$
	$$
	\S^{\omega,h_{n}}_{1/v_{\lambda_{N+1}}} \subseteq \overline{ \Z^{\{\omega\}}_{\{\V\}}}^{\S^{\omega,h_{n+1}}_{1/v_{\lambda_{N}}}}.
	$$
Let $\varphi \in \S^{\omega,h_{n}}_{1/v_{\lambda_{N+1}}}$ be arbitrary. Choose $\eta \in \mathcal{D}(\R^d)$ such that $\eta(0) = 1$ and set $\eta_j(x) = \eta(x/j)$ for $j \in \ZZ_+$. We leave it to the reader to verify that $\varphi_j = (\chi \ast \eta_j)\varphi \in  \Z^{\{\omega\}}_{\{\V\}}$ for all $j \in \ZZ_+$ and that $\varphi_j \to \varphi$ in $\S^{\omega,h_{n+1}}_{1/v_{\lambda_{N}}}$ as $j \to \infty$.
\end{proof}

Next,  we discuss the mapping properties of the STFT on various function spaces related to the spectra  $(\Z^{\omega, h_{N}}_{(\V)})_{N \in \N}$ and $(\Z^{\{\omega\}}_{v_{\lambda_{N}}})_{N \in \N}$.
	
	\begin{lemma}\label{STFT-seq}\mbox{}
		
		\begin{itemize}
		\item[$(i)$] \emph{Beurling case}: For all  $m, n \in \N$  the mappings
					%\begin{equation}
					%	\label{eq:STFTContBeurling} 
					\begin{align*}
						&V_{\psi} : \S^{\omega, h_{m + 1}}_{1 / v_{\lambda_{n}}} \rightarrow C_{\frac{1}{v_{\lambda_{n + 1}}} \otimes e^{\frac{1}{h_{m}} \omega}}(\R^{2d}_{x, \xi}) \\
						&V_{\psi} : \S^{\omega, h_{m + 1}}_{v_{\lambda_{n+1}}} \rightarrow C_{v_{\lambda_n} \otimes e^{\frac{1}{h_{m}} \omega}}(\R^{2d}_{x, \xi}) \\
						&V_{\psi} : C_{v_{\lambda_{n+1}}} \rightarrow C_{v_{\lambda_n} \otimes 1}(\R^{2d}_{x, \xi}) 
					\end{align*}				
										%	\end{equation}
				are well-defined and continuous.
				
		\item[$(ii)$] \emph{Roumieu case}: For all  $m, n \in \N$, $n \geq 1$,  the mappings
					%\begin{equation}
					%	\label{eq:STFTContRoumieu} 
					\begin{align*}
						&V_{\psi} : \S^{\omega, h_{m}}_{1 / v_{\lambda_{n + 1}}} \rightarrow C_{\frac{1}{v_{\lambda_{n}}} \otimes e^{\frac{1}{h_{m + 1}} \omega}}(\R^{2d}_{x, \xi}) \\
						&V_{\psi} : \S^{\omega, h_{m}}_{v_{\lambda_{n}}} \rightarrow C_{v_{\lambda_{n+1}} \otimes e^{\frac{1}{h_{m + 1}} \omega}}(\R^{2d}_{x, \xi}) \\
						&V_{\psi} : C_{v_{\lambda_{n}}} \rightarrow C_{v_{\lambda_{n+1}} \otimes 1} (\R^{2d}_{x, \xi})
					\end{align*}
					%\end{equation}
				are well-defined and continuous.
				
		\end{itemize}

		%Moreover, in both cases we have that for any $m, n \in \N$:
		%	\begin{equation}
		%		\label{eq:STFTReconstruct}
		%		V^{*}_{\psi} \circ V_{\psi} = \id_{\S^{\omega, h_{m}}_{1 / v_{\lambda_{n}}}} . 
		%	\end{equation}
	\end{lemma}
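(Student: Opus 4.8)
The plan is to obtain all six statements as instances of Lemma~\ref{l:STFTGS} (for the two mappings between spaces of the form $\S^{\omega,h}_{v}$) and Lemma~\ref{l:STFTC} (for the mapping between spaces $C_{v}$), choosing the auxiliary weights $v_1,v_2,v_3,v_4$ (and, where relevant, the parameters $h_1,h_2$) so that the hypotheses reduce to properties that were deliberately built into the sequences $(h_n)_{n}$ and $(\lambda_n)_{n}$. I will describe the Beurling case; the Roumieu case is entirely parallel, with the monotonicity of both sequences and all index shifts reversed.

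Consider, for example, the first Beurling mapping $V_{\psi}:\S^{\omega,h_{m+1}}_{1/v_{\lambda_n}}\to C_{\frac{1}{v_{\lambda_{n+1}}}\otimes e^{\frac1{h_m}\omega}}(\R^{2d}_{x,\xi})$. I would apply Lemma~\ref{l:STFTGS} with $h_1=h_{m+1}$, $h_2=h_m$, $v_1=1/v_{\lambda_n}$, $v_2=1/v_{\lambda_{n+1}}$, $v_3=v_{\lambda_{n+2}}$ and $v_4=v_{\lambda_{n+1}}$. The inequality relating $h_1$ and $h_2$ required by Lemma~\ref{l:STFTGS} was built into the choice of $(h_n)_{n}$ (note $\log\sqrt d\le\log\max\{2,\sqrt d\}$). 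Condition~\eqref{v14} reads $v_{\lambda_{n+1}}(x+t)^{-1}\le C_0\,v_{\lambda_n}(x)^{-1}\widetilde{v}_{\lambda_{n+1}}(t)$, equivalently $v_{\lambda_n}(x)\le C_0\,v_{\lambda_{n+1}}(x+t)\widetilde{v}_{\lambda_{n+1}}(t)$, which follows from the submultiplicativity estimate $v_{\lambda_n}(a+b)\le C'v_{\lambda_{n+1}}(a)v_{\lambda_{n+1}}(b)$ of the construction upon writing $x=(x+t)+(-t)$; and $v_4/v_3=v_{\lambda_{n+1}}/v_{\lambda_{n+2}}\in L^1$ is likewise part of the construction. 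Finally $\psi\in\S^{\omega,h_{m+1}}_{v_{\lambda_{n+2}}}$, since $\psi\in\S^{(\omega)}_{(\V)}=\varprojlim_{h\to0^+}\S^{\omega,h}_{v_h}$ and the canonical inclusion $\S^{\omega,h}_{v_h}\hookrightarrow\S^{\omega,h_{m+1}}_{v_{\lambda_{n+2}}}$ holds as soon as $h\le\min\{h_{m+1},\lambda_{n+2}\}$.

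The two remaining Beurling mappings, which carry genuine weights, are treated in the same way but with $v_4=\widetilde{v}_{\lambda_{n+1}}$ and $v_3=\widetilde{v}_{\lambda_{n+2}}$; here the role of $\psi\in\S^{[\omega]}_{[\V]}$ is taken over by $\psi\in\S^{[\omega]}_{[\widetilde{\V}]}$, and for the $C_v$-mapping one invokes Lemma~\ref{l:STFTC}, with $\psi\in C_{\widetilde{v}_{\lambda_{n+2}}}$ obtained from $\psi\in\S^{\omega,h}_{\widetilde{v}_{\lambda_{n+2}}}$ by restricting its defining seminorm to $\alpha=0$. In the Roumieu case one argues identically, now with $h_1=h_m$, $h_2=h_{m+1}$, and with the $L^1$-room taken towards \emph{decreasing} index, e.g.\ $v_3=v_{\lambda_{n-1}}$, $v_4=v_{\lambda_n}$ for the first mapping; this is precisely what the hypothesis $n\ge1$ is for, and the membership of $\psi$ now comes from $\psi\in\S^{\omega,h_0}_{v_{\lambda_0}}\cap\S^{\omega,h_0}_{\widetilde{v}_{\lambda_0}}$ together with $h_0\le h_m$ and $\lambda_0\le\lambda_{n-1}$.

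I do not anticipate a serious obstacle: the proof amounts to bookkeeping the hypotheses of Lemmas~\ref{l:STFTGS} and~\ref{l:STFTC}. The only points that require genuine care are getting the direction of the submultiplicativity estimate right when the weights are reciprocals---handled uniformly by the substitution $x=(x+t)+(-t)$---and tracking the reflections $\widetilde{v}$, which is what forces the simultaneous use of $\psi\in\S^{[\omega]}_{[\V]}$ and $\psi\in\S^{[\omega]}_{[\widetilde{\V}]}$ and dictates whether the $L^1$-room is taken at higher or lower index.
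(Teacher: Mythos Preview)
Your proposal is correct and is exactly the approach taken in the paper: the authors simply state that the result follows directly from Lemma~\ref{l:STFTGS} and Lemma~\ref{l:STFTC} in view of the construction of $(h_n)_n$ and $(\lambda_n)_n$. Your choice of auxiliary weights and parameters, including the use of $\psi\in\S^{[\omega]}_{[\widetilde{\V}]}$ for the non-reciprocal cases and the index shift explaining the hypothesis $n\ge 1$ in the Roumieu case, is precisely the bookkeeping the paper leaves implicit.
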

	
	\begin{proof}
In view of the definition of the sequences $(h_{n})_{n \in \N}$ and $(\lambda_{n})_{n \in \N}$, this follows directly from Lemma \ref{l:STFTGS} and Lemma \ref{l:STFTC}.
	\end{proof}

We now give a variant of the Plancherel type identity \eqref{Plancherel}
	
	\begin{lemma}\label{lemma:STFTDesing}
		Let  $n \in \N$.
		\begin{itemize}
		\item[$(i)$]  \emph{Beurling case}: For all $f \in C_{v_{\lambda_{n + 3}}}$ and $\varphi \in \S^{\omega,1}_{1 / v_{\lambda_{n}}}$ it holds that
			\begin{equation}
				\label{eq:STFTDesing}
				\int_{\R^{d}} f(t) \varphi(t) dt = \int \int_{\R^{2d}} V_{\psi} f(x, \xi) V_{\overline{\psi}} \varphi(x, -\xi) dx d\xi.
			\end{equation}
			
\item[$(ii)$] \emph{Roumieu case}: \eqref{eq:STFTDesing} holds for all $f \in C_{v_{\lambda_{n}}}$ and $\varphi \in \Z^{\{\omega\}}_{1 / v_{\lambda_{n + 3}}}$. 
		\end{itemize}
	\end{lemma}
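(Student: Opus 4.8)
The plan is to prove the identity \eqref{eq:STFTDesing} by reducing it to the standard Plancherel formula \eqref{Plancherel} for the STFT on $L^2(\R^d)$. The right-hand side of \eqref{eq:STFTDesing} is, up to the reflection $\xi \mapsto -\xi$ in the second STFT, exactly the inner product $(V_\psi f, V_\psi \overline{\varphi})_{L^2(\R^{2d})}$, which by \eqref{Plancherel} (using $\|\psi\|_{L^2} = 1$) equals $(f, \overline{\varphi})_{L^2} = \int_{\R^d} f(t)\varphi(t)\,dt$. So morally the statement is just \eqref{Plancherel}; the actual work is entirely in justifying that everything makes sense when $f$ and $\varphi$ are only in the weighted spaces $C_{v_{\lambda_{n+3}}}$ and $\S^{\omega,1}_{1/v_{\lambda_n}}$ rather than in $L^2$, i.e.\ checking absolute convergence of the double integral and the applicability of Fubini.

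The key steps, in order, are as follows. First I would record the elementary relation between $V_{\overline\psi}\varphi(x,-\xi)$ and $\overline{V_\psi \overline{\varphi}(x,\xi)}$: from the definition $V_\psi g(x,\xi) = \int g(t)\overline{\psi(t-x)}e^{-2\pi i \xi\cdot t}\,dt$ one computes $\overline{V_\psi \overline{\varphi}(x,\xi)} = \int \varphi(t)\psi(t-x)e^{2\pi i \xi\cdot t}\,dt = V_{\overline\psi}\varphi(x,-\xi)$, so the integrand on the right of \eqref{eq:STFTDesing} is $V_\psi f(x,\xi)\,\overline{V_\psi\overline\varphi(x,\xi)}$. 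Second, I would establish that the product $V_\psi f \cdot \overline{V_\psi\overline\varphi}$ is in $L^1(\R^{2d})$. For this I use the mapping properties already available: since $\psi \in \S^{[\omega]}_{[\V]}\cap\S^{[\omega]}_{[\widetilde\V]}$, in the Beurling case $\psi$ lies in the relevant $\S^{\omega,h}_{v_{\lambda}}$ and $\S^{\omega,h}_{\widetilde v_\lambda}$ spaces, and by Lemma \ref{STFT-seq} (or directly Lemma \ref{l:STFTGS} and Lemma \ref{l:STFTC}) we get $V_\psi f \in C_{v_{\lambda_{n+2}}\otimes 1}(\R^{2d})$ and $V_\psi\overline\varphi \in C_{(1/v_{\lambda_{n+1}})\otimes e^{\frac{1}{h_{n-1}}\omega}}(\R^{2d})$ (adjusting indices to the fixed scale $h_0 = 1$ appearing in the hypothesis $\varphi \in \S^{\omega,1}_{1/v_{\lambda_n}}$). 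The product is then bounded by a constant times $\dfrac{v_{\lambda_{n+2}}(x)}{v_{\lambda_{n+1}}(x)}\, e^{-\frac{1}{h_{n-1}}\omega(\xi)}$; the first factor is in $L^1(\R^d_x)$ by \eqref{eq:improvedN}, and $e^{-\frac{1}{h_{n-1}}\omega(\xi)}$ is in $L^1(\R^d_\xi)$ because $\log t = o(\omega(t))$, so the product is in $L^1(\R^{2d})$ as required. Third, with integrability secured I apply Fubini together with \eqref{Plancherel}: the map $(\xi) \mapsto V_\psi f(x,\xi)$ is, for fixed $x$, the Fourier transform of $t \mapsto f(t)\overline{\psi(t-x)}$, and similarly for $\varphi$; integrating the product over $\xi$ first gives, by Parseval on $\R^d_\xi$, the quantity $\int_{\R^d} f(t)\varphi(t)|\psi(t-x)|^2\,dt$, and then integrating over $x$ and using $\int_{\R^d}|\psi(t-x)|^2\,dx = \|\psi\|_{L^2}^2 = 1$ gives $\int_{\R^d} f(t)\varphi(t)\,dt$. (Alternatively one invokes \eqref{Plancherel} directly after noting $f \in L^2$ is not available, so the Fubini/Parseval route is the honest one.) The Roumieu case is identical after swapping the roles of the monotonicity directions: now $f \in C_{v_{\lambda_n}}$ and $\varphi \in \Z^{\{\omega\}}_{1/v_{\lambda_{n+3}}}$, so $\varphi \in \S^{\omega,h}_{1/v_{\lambda_{n+3}}}$ for some $h$, and Lemma \ref{STFT-seq}$(ii)$ yields $V_\psi f \in C_{v_{\lambda_{n+1}}\otimes 1}$ and $V_\psi\overline\varphi \in C_{(1/v_{\lambda_{n+2}})\otimes e^{\frac{1}{h'}\omega}}$ for suitable indices, giving an $L^1$ majorant of the form $\dfrac{v_{\lambda_{n+1}}(x)}{v_{\lambda_{n+2}}(x)}e^{-\frac{1}{h'}\omega(\xi)}$, again integrable by \eqref{eq:improvedN} and condition $(\gamma)$.

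I expect the main obstacle to be purely bookkeeping: choosing the index shifts (three steps, $n \to n+1 \to n+2 \to n+3$) so that the weight $v_{\lambda_{n+3}}$ on $f$, the weight $1/v_{\lambda_n}$ on $\varphi$, and the requirement that $\psi$ lie simultaneously in $\S^{[\omega]}_{[\V]}$ and $\S^{[\omega]}_{[\widetilde\V]}$ all line up so that the product of the two STFTs has an $L^1$ majorant of the shape $(v_{\lambda_{j}}/v_{\lambda_{j+1}})(x)\,e^{-c\omega(\xi)}$ with $v_{\lambda_j}/v_{\lambda_{j+1}} \in L^1\cap C_0$. One needs the submultiplicativity estimates $v_{\lambda_k}(x+t) \le C v_{\lambda_{k+1}}(x)v_{\lambda_{k+1}}(t)$ (Beurling) from the construction of the sequence to feed hypothesis \eqref{v14} of Lemma \ref{l:STFTGS}/\ref{l:STFTC}, and the $\phi^*$-comparison inequality with the $h_n$ there as well; so effectively one is just reading off the right instances of Lemma \ref{STFT-seq}. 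No genuinely new analytic input beyond \eqref{Plancherel}, the mapping lemmas, and $(\gamma)$ is needed — the content of the lemma is a rigorous interchange-of-integration argument, and the two items are proved by the same computation with the indices moved in opposite directions.
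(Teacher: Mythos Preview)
Your proposal is correct and follows essentially the same route as the paper's proof: establish absolute integrability of the double integral via the STFT mapping lemmas (Lemma~\ref{STFT-seq}, resp.\ Lemmas~\ref{l:STFTGS} and~\ref{l:STFTC}), then apply Parseval in the $\xi$-variable for fixed $x$ followed by Fubini and $\|\psi\|_{L^2}=1$. The paper writes the argument only in the Roumieu case and phrases the Parseval step directly as $V_\psi f(x,\xi)=\mathcal F(fT_x\overline\psi)(\xi)$, $V_{\overline\psi}\varphi(x,-\xi)=\overline{\mathcal F(\overline{\varphi T_x\psi})}(\xi)$, which is exactly your ``Fubini/Parseval route''; one minor caution is that in the Beurling case $\varphi\in\S^{\omega,h_0}_{1/v_{\lambda_n}}$ does not literally fall under Lemma~\ref{STFT-seq}$(i)$ (the input there is $\S^{\omega,h_{m+1}}$ with $h_{m+1}<h_0$), so you indeed need to invoke Lemma~\ref{l:STFTGS} directly as you indicate.
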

	
	\begin{proof}
	We only show the Roumieu case as the Beurling case is similar. By Lemma \ref{STFT-seq}$(ii)$ we have that $V_{\psi} f \in C_{v_{\lambda_{n + 1}} \otimes 1}(\R^{2d}_{x, \xi})$ and $V_{\psi} \varphi \in C_{1 / v_{\lambda_{n + 2}} \otimes e^{\omega / h_{m}}}(\R^{2d}_{x, \xi})$ for some $m \in \N$. Hence,
	\[ \int \int_{\R^{2d}} |V_{\psi} f(x, \xi) V_{\overline{\psi}} \varphi(x, -\xi)| dx d\xi < \infty .  \]
	Note that $fT_x \psi, \varphi T_x\psi \in L^2(\R^d)$ for all $x \in \R^d$ fixed. Since $V_\psi f(x,\xi) = \mathcal{F}(f T_x \overline{\psi}) (\xi)$ and  $V_{\overline{\psi}} \varphi(x, -\xi) = \overline{\mathcal{F}(\overline{\varphi T_x \psi})} (\xi)$, Plancherel's theorem and Fubini's theorem imply that
	\begin{align*}
	\int \int_{\R^{2d}} V_{\psi} f(x, \xi) V_{\overline{\psi}} \varphi(x, -\xi) dx d\xi 	&= \int_{\R^d}  \left(\int_{\R^{d}}  \mathcal{F}(f T_x \overline{\psi})(\xi) \overline{\mathcal{F}(\overline{\varphi T_x \psi})} (\xi)d\xi \right) dx \\
	&= \int_{\R^d}  \left(\int_{\R^{d}}  f(t)  \overline{\psi}(t-x) \varphi(t)\psi(t-x) dt \right) dx \\
	&= \int_{\R^{d}} f(t) \varphi(t) dt,
	\end{align*}
where we used that $\|\psi \|_{L^2} = 1$.
%		By Lemma \ref{l:STFTGS} it follows that $V_{\psi} f \in C_{v_{\lambda_{n + 2}} \otimes e^{\omega / h_{m}}}(\R^{2d}_{x, \xi})$ ($V_{\psi} f \in C_{v_{\lambda_{n + 1}} \otimes e^{\omega / h_{m + 1}}}(\R^{2d}_{x, \xi})$) and by \eqref{eq:STFTContBeurling} we have that $V_{\psi} \varphi \in C_{1 / v_{\lambda_{n + 1}} \otimes e^{\omega / h_{m}}}(\R^{2d}_{x, \xi})$ (by \eqref{eq:STFTContRoumieu} we have that $V_{\psi} \varphi \in C_{1 / v_{\lambda_{n}} \otimes e^{\omega / h_{m + 1}}}(\R^{2d}_{x, \xi})$). In particular, it follows that 
%			\[ \int \int_{\R^{2d}} |V_{\psi} f(x, \xi) V_{\psi} \varphi(x, -\xi)| dx d\xi < \infty .  \]
%		Now, by \eqref{eq:STFTReconstruct} and Fubini's theorem it follows that
%			\begin{multline*} 
%				\int f(t) \varphi(t) dt  
%				= \int_{\R^{d}} f(t) \left( \int \int_{\R^{2d}} V_{\psi} \varphi(x, \xi) \psi(t - x) e^{2 \pi i \xi \cdot t} dx d\xi \right) dt \\
%				= \int \int_{\R^{2d}} V_{\overline{\psi}} f(x, -\xi) V_{\psi} \varphi(x, \xi) dx d\xi 
%				= \int \int_{\R^{2d}} V_{\psi} f(x, \xi) V_{\psi} \varphi(x, -\xi) dx d\xi . 
%			\end{multline*}
	\end{proof}
	
%We will use Theorem \ref{t:BarrelledImpliesP*2} in order to show that the implication $(iii) \Rightarrow (i)$ in Theorem \ref{t:main} holds true. To do this, we first need to verify that the projective limits introduced in \eqref{eq:ZSpectra} are in fact strongly reduced. 

	In the following key lemma, we evaluate the condition from Theorem \ref{t:BarrelledImpliesP*2} for the spectra $(\Z^{\omega, h_{N}}_{(\V)})_{N \in \N}$ and $(\Z^{\{\omega\}}_{v_{\lambda_{N}}})_{N \in \N}$  in terms of the  short-time Fourier transform. We set
	$$
	C_{(\V)} = \varprojlim_{\lambda \to 0^+} C_{v_\lambda}.
	$$
%We now find the following inequalities when $\Z^{[\omega]}_{[\V]}$ is barrelled.
	
	\begin{lemma} \mbox{}
		\label{l:wQSTFT} 
		\begin{itemize}
		
		\item[$(i)$] \emph{Beurling case}: If $\Z^{(\omega)}_{(\V)}$ is barrelled, then 
				\begin{gather*}
					\forall N \in \N ~ \exists M \geq N, n \in \N ~ \forall K \geq M, m \in \N ~ \exists k \in \N, C > 0~\forall f \in C_{(\V)} ~ : \\
					 \|V_{\psi} f\|_{v_{\lambda_{m}} \otimes e^{-\frac{1}{h_{M}} \omega}} \leq C \left( \|V_{\psi} f\|_{v_{\lambda_{n}} \otimes e^{-\frac{1}{h_{N}} \omega}} + \|V_{\psi} f\|_{v_{\lambda_{k}} \otimes e^{-\frac{1}{h_{K}} \omega}} \right) .
				\end{gather*}
				
		\item[$(ii)$] \emph{Roumieu case}: If $\Z^{\{\omega\}}_{\{\V\}}$ is barrelled, then
				\begin{gather*}
					\forall N \geq 2 ~ \exists M \geq N, n \in \N ~ \forall K \geq M, m \in \N ~ \exists k \in \N, C > 0~\forall f \in C_{v_{\lambda_{1}}} ~: \\
					 \|V_{\psi} f\|_{v_{\lambda_{M}} \otimes e^{-\frac{1}{h_{m}} \omega}} \leq C \left( \|V_{\psi} f\|_{v_{\lambda_{N}} \otimes e^{-\frac{1}{h_{n}} \omega}} + \|V_{\psi} f\|_{v_{\lambda_{K}} \otimes e^{-\frac{1}{h_{k}} \omega}} \right) .
				\end{gather*}
			\end{itemize}
	\end{lemma}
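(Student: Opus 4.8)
\textbf{Proof plan for Lemma \ref{l:wQSTFT}.}

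The plan is to apply Theorem \ref{t:BarrelledImpliesP*2} to the strongly reduced spectrum from Lemma \ref{l:ZStronglyReduced}, and then translate each dual norm $\|y\|^*_{X_{L,l}}$ into an STFT-weighted sup-norm of $V_\psi f$ via the desingularization identity \eqref{eq:STFTDesing}. Concretely, in the Roumieu case the relevant spectrum is $(\Z^{\{\omega\}}_{v_{\lambda_N}})_{N\in\N}$ with steps $\Z^{\{\omega\}}_{v_{\lambda_N}} = \varinjlim_n \S^{\omega,h_n}_{1/v_{\lambda_N}}$. Barrelledness of $\Z^{\{\omega\}}_{\{\V\}} = \varprojlim_N \Z^{\{\omega\}}_{v_{\lambda_N}}$ therefore gives, for every $N$, an $M\ge N$ and $n\in\N$ so that for every $K\ge M$ and $m\in\N$ there are $k\in\N$, $C>0$ with
\[ \|y\|^*_{\S^{\omega,h_m}_{1/v_{\lambda_M}}} \leq C\bigl( \|y\|^*_{\S^{\omega,h_n}_{1/v_{\lambda_N}}} + \|y\|^*_{\S^{\omega,h_k}_{1/v_{\lambda_K}}}\bigr), \qquad y \in (\Z^{\{\omega\}}_{v_{\lambda_N}})'. \]
(We absorb an index shift: replacing $N$ by $N+3$ etc.\ will be harmless since the conclusion is an $\forall N \exists M \cdots$ statement and can be re-indexed; the value $N\ge 2$ in the statement comes from the constraint $N\ge N_0+1$, $n\ge n_0$ in Lemma \ref{l:ZStronglyReduced}, adjusted so that the relevant weights $v_{\lambda_n}$ with $n\ge 1$ appear.)

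The second and main step is to identify, for $f \in C_{v_{\lambda_1}}$, the functional $y_f : \varphi \mapsto \int_{\R^d} f(t)\varphi(t)\,dt$ as an element of the relevant duals and to compute its dual norms. By Lemma \ref{lemma:STFTDesing}$(ii)$, $\ev{y_f}{\varphi} = \int\int_{\R^{2d}} V_\psi f(x,\xi)\, V_{\overline\psi}\varphi(x,-\xi)\,dx\,d\xi$ for $\varphi$ in the appropriate space. To bound $\|y_f\|^*_{\S^{\omega,h_l}_{1/v_{\lambda_L}}}$ from above, I would use the STFT mapping property from Lemma \ref{STFT-seq}$(ii)$ (applied to $\overline\psi$, which lies in the same spaces as $\psi$ since $\psi \in \S^{[\omega]}_{[\V]}\cap\S^{[\omega]}_{[\widetilde\V]}$ and these spaces are closed under conjugation): $V_{\overline\psi}$ maps $\S^{\omega,h_l}_{1/v_{\lambda_{L+1}}}$ continuously into $C_{\frac{1}{v_{\lambda_L}}\otimes e^{\omega/h_{l+1}}}$, so
\[ |\ev{y_f}{\varphi}| \leq \|V_\psi f\|_{v_{\lambda_L}\otimes e^{-\omega/h_{l+1}}}\cdot \|V_{\overline\psi}\varphi\|_{\frac{1}{v_{\lambda_L}}\otimes e^{\omega/h_{l+1}}}\cdot\Bigl(\textstyle\int\int (v_{\lambda_L}(x))^{-?}\cdots\Bigr), \]
where the remaining integral is finite because of the $L^1\cap C_0$ quotient conditions built into the sequences; hence $\|y_f\|^*_{\S^{\omega,h_l}_{1/v_{\lambda_{L+1}}}} \lesssim \|V_\psi f\|_{v_{\lambda_L}\otimes e^{-\omega/h_{l+1}}}$. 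For the lower bound on $\|y_f\|^*_{\S^{\omega,h_m}_{1/v_{\lambda_M}}}$ I would test $y_f$ against the specific functions $\varphi = M_{-\xi_0}T_{x_0}\psi$ (suitably normalized): by \eqref{repres}, $|V_{\overline\psi}(M_{-\xi_0}T_{x_0}\psi)(x,-\xi)|$ peaks at $(x,\xi)=(x_0,\xi_0)$, so $|\ev{y_f}{M_{-\xi_0}T_{x_0}\psi}|$ controls $|V_\psi f(x_0,\xi_0)|$ up to an error term (coming from the tails of $V_\psi f$ against the bump $V_\psi\psi$), while Lemma \ref{l:TFSGS} bounds $\|M_{-\xi_0}T_{x_0}\psi\|_{\S^{\omega,h_m}_{1/v_{\lambda_M}}}$ by a constant times $v_{\lambda_?}(x_0)^{-1} e^{A\omega(\xi_0)/h_?}$; rearranging gives $\|V_\psi f\|_{v_{\lambda_M}\otimes e^{-\omega/h_m}} \lesssim \|y_f\|^*_{\S^{\omega,h_m}_{1/v_{\lambda_M}}} + (\text{tail})$. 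The convolution-square identity \eqref{Plancherel} ($V_\psi V_\psi f = V_\psi f * V_\psi\psi$-type reproducing formula) is what lets one absorb the tail term into the right-hand side, possibly at the cost of one more index shift.

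Feeding these two-sided estimates into the dual-norm inequality from Theorem \ref{t:BarrelledImpliesP*2}, with the index correspondence $N\leftrightarrow(v_{\lambda_N},e^{-\omega/h_N})$-type weights on both sides and all the one-step shifts ($n\to n\pm1$, $m\to m\pm1$, an overall shift by $3$ for the desingularization) reconciled, yields exactly the claimed inequality for $\|V_\psi f\|_{v_{\lambda_M}\otimes e^{-\omega/h_m}}$ in terms of $\|V_\psi f\|_{v_{\lambda_N}\otimes e^{-\omega/h_n}}$ and $\|V_\psi f\|_{v_{\lambda_K}\otimes e^{-\omega/h_k}}$. The Beurling case is entirely parallel, using the first mapping in Lemma \ref{STFT-seq}$(i)$, Lemma \ref{lemma:STFTDesing}$(i)$, the spectrum $(\Z^{\omega,h_N}_{(\V)})_N$, and the space $C_{(\V)}$; here the roles of "small $h$, small $\lambda$" replace "large $h$, large $\lambda$" and the index on $\omega$ in the exponent tracks the $h_N$-index while the index on $v$ tracks the $\lambda_n$-index, which is why the quantifier pattern in $(i)$ places $m$ (the $\lambda$-index) in the inner existential and $N$ (the $h$-index) in the outer one. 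The main obstacle is bookkeeping: matching the quantifier alternation of Theorem \ref{t:BarrelledImpliesP*2} with that of the conclusion while correctly propagating the several unavoidable index shifts coming from Lemma \ref{M12}, the $[\condM]/[\condN]$-type recursions, the $+3$ in Lemma \ref{lemma:STFTDesing}, and the reproducing-formula argument used to kill the tail terms — each individual estimate is routine, but one must be careful that every "$\exists$" on the STFT side can indeed be supplied before the corresponding "$\forall$" on the dual-norm side, i.e.\ that no shift runs the wrong way.
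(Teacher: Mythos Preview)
Your overall strategy matches the paper's: apply Theorem~\ref{t:BarrelledImpliesP*2} to the strongly reduced spectrum, embed $C_{v_{\lambda_1}}$ (resp.\ $C_{(\V)}$) into the relevant duals, and sandwich the dual norms between STFT-weighted sup norms. The upper bound $\|y_f\|^*\lesssim\|V_\psi f\|$ via the desingularization identity~\eqref{eq:STFTDesing} together with Lemma~\ref{STFT-seq} is exactly how the paper proves its inequality~\eqref{eq:DualNormBoundSTFT}.

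You overcomplicate the reverse direction $\|V_\psi f\|\lesssim\|y_f\|^*$, however. There are no tail terms to absorb and no reproducing formula is needed: by the very definition of the STFT one has $V_\psi f(x,\xi)=\langle y_f,\overline{M_\xi T_x\psi}\rangle$ \emph{exactly}, so
\[
|V_\psi f(x,\xi)|\le\|y_f\|^*_{\S^{\omega,h_{l+2}}_{1/v_{\lambda_L}}}\,\|M_\xi T_x\psi\|_{\S^{\omega,h_{l+2}}_{1/v_{\lambda_L}}},
\]
and Lemma~\ref{l:TFSGS} bounds the second factor by $C\,v_{\lambda_{L+1}}(x)^{-1}e^{\omega(\xi)/h_l}$. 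This immediately yields $\|V_\psi f\|_{v_{\lambda_{L+1}}\otimes e^{-\omega/h_l}}\le C_0\,\|y_f\|^*_{\S^{\omega,h_{l+2}}_{1/v_{\lambda_L}}}$ with no error. Your proposed route through the integral representation~\eqref{eq:STFTDesing} for this direction produces a smoothed version of $V_\psi f$ rather than its pointwise value, and the vague appeal to a reproducing formula to undo this is both unnecessary and not obviously sound (the reproducing identity bounds $|V_\psi f|$ by a convolution, not the other way around).

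One minor correction: the restriction $N\ge2$ in part~$(ii)$ does not come from Lemma~\ref{l:ZStronglyReduced}. It is needed so that $f\in C_{v_{\lambda_1}}$ actually defines a continuous functional on $\Z^{\{\omega\}}_{v_{\lambda_N}}$: the pairing $\varphi\mapsto\int f\varphi$ is well defined and continuous on $\S^{\omega,h_n}_{1/v_{\lambda_N}}$ because $v_{\lambda_N}/v_{\lambda_1}\in L^1$ for $N\ge2$ by construction of the sequence $(\lambda_n)_{n\in\N}$.
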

	
	\begin{proof}
		Again, we only show the Roumieu case as the Beurling case is similar. By Theorem \ref{t:BarrelledImpliesP*2} and Lemma \ref{l:ZStronglyReduced}, we have that
			\begin{gather*}
				\forall N \in \N ~ \exists M \geq N, n \in \N ~ \forall K \geq M, m \in \N ~ \exists k \in \N, C > 0~\forall f \in \left(\Z^{\{\omega\}}_{v_{\lambda_{N}}}\right)^{\prime}~:  \\
				 \|f\|^{*}_{\S^{\omega, h_{m}}_{1/v_{\lambda_{M}}}} \leq C \left( \|f\|^{*}_{\S^{\omega, h_{n}}_{1/v_{\lambda_{N}}}} + \|f\|^{*}_{\S^{\omega, h_{k}}_{1/v_{\lambda_{K}}}} \right) .
			\end{gather*}
		We may view any $f \in C_{v_{\lambda_{1}}}$ as an element of $\left(\Z^{\{\omega\}}_{v_{\lambda_{N}}}\right)^{\prime}$, $N \geq 2$, via
			\[ \ev{f}{\varphi} = \int_{\R^{d}} f(x) \varphi(x) dx , \qquad \varphi \in \Z^{\{\omega\}}_{v_{\lambda_{N}}} . \]
	Consequently, it suffices to show that 
			\begin{equation}
				\label{eq:STFTBoundDualNorm}
				\forall L \geq 2,  l \in \N ~ \exists C_{0} > 0 ~ \forall f \in C_{v_{\lambda_{1}}} ~:~ \|V_{\psi} f\|_{v_{\lambda_{L + 1}} \otimes e^{-\frac{1}{h_{l}} \omega}} \leq C_{0} \|f\|^{*}_{\S^{\omega, h_{l + 2}}_{1 / v_{\lambda_{L}}}} 
			\end{equation}
		and
			\begin{equation}
				\label{eq:DualNormBoundSTFT}
				\forall L \geq 2, l \in \N ~ \exists C_{1} > 0 ~ \forall f \in C_{v_{\lambda_{1}}} ~ : ~ \|f\|^{*}_{\S^{\omega, h_{l}}_{1 / v_{\lambda_{L + 2}}}} \leq C_{1} \|V_{\psi} f\|_{v_{\lambda_{L}} \otimes e^{-\frac{1}{h_{l+2}} \omega}}.
			\end{equation}
		We fist show \eqref{eq:STFTBoundDualNorm}. By Lemma \ref{l:TFSGS} and the fact that $h_{l+1} \geq Ah_l$, there is $C >0$ such that 
			\[ \|M_{\xi} T_{x} \psi\|_{\S^{\omega, h_{l + 2}}_{1 / v_{\lambda_{L}}}} \leq C \|\psi\|_{\S^{\omega, h_{l+1}}_{\widetilde{v}_{\lambda_{L}}}} \frac{1}{v_{\lambda_{L + 1}}(x)} e^{\frac{1}{h_{l}} \omega(\xi)}, \qquad (x,\xi) \in \R^{2d}. \]
		Set $C_{0} = C \|\psi\|_{\S^{\omega, h_{l+1}}_{\widetilde{v}_{\lambda_{L}}}}$. Then, for all $f \in C_{v_{\lambda_{1}}}$ and $(x,\xi) \in \R^{2d}$,
			\begin{align*} 
				|V_{\psi} f(x, \xi)| v_{\lambda_{L + 1}}(x) e^{-\frac{1}{h_{l}} \omega(\xi)} 
				&= |\ev{f}{ \overline{M_{\xi} T_{x}\psi}}| v_{\lambda_{L + 1}}(x) e^{-\frac{1}{h_{l}} \omega(\xi)}  \\
				&\leq \|f\|^{*}_{\S^{\omega, h_{l + 2}}_{1 / v_{\lambda_{L}}}} \|M_{\xi} T_{x} \psi\|_{\S^{\omega, h_{l + 2}}_{1 / v_{\lambda_{L}}}} v_{\lambda_{L + 1}}(x) e^{-\frac{1}{h_{l}} \omega(\xi)} \\
				&\leq C_{0} \|f\|^{*}_{\S^{\omega, h_{l+ 2}}_{1 / v_{\lambda_{L}}}}, 
			\end{align*}
		which shows the result. Next, we prove  \eqref{eq:DualNormBoundSTFT}. By  Lemma \ref{STFT-seq}$(ii)$, there is  $C > 0$ such that
		$$
		\|V_{\overline{\psi}} \varphi \|_{\frac{1}{v_{\lambda_{L + 1}}} \otimes e^{\frac{1}{ h_{l + 1}}\omega}} \leq C \| \varphi\|_{\S^{\omega, h_{l}}_{1 / v_{\lambda_{L + 2}}}}, \qquad \forall \varphi \in \S^{\omega, h_{l}}_{1 / v_{\lambda_{L + 2}}}.
		$$
		Hence, Lemma \ref{STFT-seq}$(ii)$ and Lemma \ref{lemma:STFTDesing}$(ii)$ imply that  for all  $f \in C_{v_{\lambda_{1}}}$ and $\varphi \in \S^{\omega, h_{l}}_{1 / v_{\lambda_{L + 2}}}$  it holds that 
			\begin{align*} 
				|\ev{f}{\varphi}| 
				&\leq \int \int_{\R^{2d}} |V_{\psi} f(x, \xi)| |V_{\overline{\psi}} \varphi(x, -\xi)| dx d\xi \\
				&\leq C \| \varphi\|_{\S^{\omega, h_{l}}_{1 / v_{\lambda_{L + 2}}}} \int \int_{\R^{2d}} |V_{\psi} f(x, \xi)| v_{\lambda_{L+1}}(x)e^{-\frac{1}{ h_{l + 1}}\omega(\xi)} dx d\xi \\
				&\leq C\|v_{\lambda_{L + 1}} / v_{\lambda_{L}}\|_{L^{1}} \|e^{-(\frac{1}{h_{l + 1}} - \frac{1}{h_{l+2}}) \omega}\|_{L^{1}} \|V_{\psi} f\|_{v_{\lambda_{L}} \otimes e^{-\frac{1}{h_{l + 2}} \omega}}  \| \varphi\|_{\S^{\omega, h_{l}}_{1 / v_{\lambda_{L + 2}}}} ,				
			\end{align*}
		which proves the result.
	\end{proof}
	
We are ready to show our main result.
	
	\begin{proof}[Proof of $(iii) \Rightarrow (i)$ in Theorem \ref{t:main}]
		%The implications $(i) \Rightarrow (ii) \Rightarrow (iii)$ were shown in part $(a)$ of \cite[Theorem 5.1]{D-N-WeighPLBSpUltraDiffFuncMultSp}. We now prove that $(iii) \Rightarrow (i)$ holds. 
		Once more, we only show the Roumieu case as the  Beurling case is similar. By Lemma \ref{wQ}$(ii)$, it suffices to show that
			\begin{gather*}
				\forall N \geq 2 ~ \exists M \geq N, n \in \N ~ \forall K \geq M, m \in \N ~ \exists k \in \N, C > 0 ~ \forall (x, \xi) \in \R^{2d}~: \\
				v_{\lambda_{M}}(x) e^{-\frac{1}{h_{m}} \omega(\xi)} \leq C \left( v_{\lambda_{N}}(x) e^{-\frac{1}{h_{n}} \omega(\xi)} + v_{\lambda_{K}}(x) e^{-\frac{1}{h_{k}} \omega(\xi)} \right) .
			\end{gather*}
		Set $f_{x,\xi} = M_{\xi} T_{x} \psi \in C_{v_{\lambda_1}}$ for $(x,\xi) \in \R^{2d}$.
			Let $L \geq 2, l \in \N$ be arbitrary. The identity \eqref{repres} implies that for  all $(x,\xi) \in \R^{2d}$ 
			\begin{align*} 
				\|V_{\psi} (f_{x,\xi})\|_{v_{\lambda_{L}} \otimes e^{-\frac{1}{h_{l}} \omega}} 	&= \sup_{y, \eta \in \R^{d}} |V_{\psi} \psi(y - x, \eta - \xi)| v_{\lambda_{L}}(y) e^{-\frac{1}{h_{l}} \omega(\eta)} \\
				&\geq |V_{\psi} \psi(0, 0)| v_{\lambda_{L}}(x) e^{-\frac{1}{h_{l}} \omega(\xi)} \\
				&= v_{\lambda_{L}}(x) e^{-\frac{1}{h_{l}} \omega(\xi)},
			\end{align*}
			where we used that $\| \psi \|_{L^2} = 1$, and  (recall that $h_{n + 1} > S h_{n}$)
			\begin{align*}
				\|V_{\psi} (f_{x,\xi})\|_{v_{\lambda_{L+1}} \otimes e^{-\frac{1}{h_{l}} \omega}} &=  \sup_{y, \eta \in \R^{d}} |V_{\psi} \psi(y - x, \eta - \xi)| v_{\lambda_{L+1}}(y) e^{-\frac{1}{h_{l}} \omega(\eta)}\\
				&\leq C' e^{\frac{1}{h_{l}}} \|V_{\psi} \psi\|_{v_{\lambda_{L}} \otimes e^{\frac{1}{h_{l}} \omega}} v_{\lambda_{L}}(x) e^{-\frac{1}{h_{l + 1}} \omega(\xi)}.
			\end{align*}
				Hence, the result follows by applying the condition from Lemma \ref{l:wQSTFT}$(ii)$ to $f = f_{x,\xi}$, $(x,\xi) \in \R^{2d}$.
	\end{proof}

\end{document}